\newcommand{\N}{\ensuremath{\mathbb{N}}}
\newcommand{\R}{\ensuremath{\mathbb{R}}}
\newcommand{\C}{\ensuremath{\mathbb{C}}}
\renewcommand{\d}{\mathrm{d}}
\newcommand{\dd}{\mathrm{d}}
\newcommand{\di}{\mathbf{d}}
\newcommand{\p}{\mathbf{p}}
\newcommand{\q}{\mathbf{q}}
\renewcommand{\sc}{\mathrm{sc}}
\renewcommand{\Im}{\mathrm{Im}\,}
\newcommand{\eps}{\varepsilon}
\renewcommand{\S}{\mathbb{S}}
\renewcommand{\H}{{\mathbf{H}}}
\newcommand{\E}{{\mathbf{E}}}
\newcommand{\Ei}{{\mathbf{E}_\mathrm{in}}}
\renewcommand{\div}{\mathrm{div}}
\newcommand{\curl}{\mathrm{curl}\,}
\newcommand{\x}{\mathbf{x}}
\newcommand{\y}{\mathbf{y}}
\newcommand{\z}{\mathbf{z}}
\renewcommand{\u}{\mathbf{u}}
\newtheorem{defi}{Definition}
\newtheorem{theorem}[defi]{Theorem}
\newtheorem{remark}[defi]{Remark}
\begin{document}

\title{Stable Reconstruction of Anisotropic Objects from Near-Field Electromagnetic Data}

\author{Tran H. Lan\thanks{Faculty of Applied Sciences, Ho Chi Minh City University of Technology and Education, Ho Chi Minh City, Vietnam; (\texttt{lanth@hcmute.edu.vn})} \and Dinh-Liem Nguyen\thanks{Department of Mathematics, Kansas State University, Manhattan, KS 66506, USA; (\texttt{dlnguyen@ksu.edu})}
}

\date{}
\maketitle

\begin{abstract}
This paper addresses   the electromagnetic inverse  scattering problem of 
determining the location and  shape of anisotropic  objects from near-field data. 
We investigate both cases involving the Helmholtz equation and Maxwell's equations for this inverse problem. 
Our study focuses on developing  efficient imaging functionals that enable a  fast and stable recovery of the anisotropic object.
The implementation of the imaging functionals is simple and avoids the need to solve an ill-posed problem.
 The resolution analysis    of the imaging functionals   is conducted using the Green representation formula. 
 Furthermore, we  establish stability estimates for these imaging functionals when  noise is present in the data.  
 To illustrate the effectiveness of the methods, we present numerical examples showcasing their performance.
\end{abstract}

\sloppy

{\bf Keywords. }
 stable imaging functional, electromagnetic inverse  scattering, near-field data, Maxwell's equations 

\bigskip

{\bf AMS subject classification. }
 35R30,  35R09, 65R20

\section{Introduction} 

We consider the inverse  scattering problem for both the Helmholtz equation and Maxwell's equations  concerning anisotropic objects. This inverse problem has important applications in various areas including nondestructive testing, radar imaging, medical imaging, and geophysical exploration~\cite{Colto2013}.
There has been a substantial body of literature  on both theoretical and numerical investigations of this inverse problem and its variants, see~\cite{Cakon2016, Colto2013} and references therein. For numerical solving  of  the inverse problem, sampling methods (e.g. linear sampling method~\cite{Colto1996}, factorization method~\cite{Kirsc1998}) offer distinct advantages  over nonlinear optimization-based techniques. These  sampling methods are fast, non-iterative and  do not require advanced a priori information about the unknown target. More details about sampling methods  can be found in~\cite{Potth2006, Kirsc2008, Colto2003,Cakon2006,   Cakon2011} and references therein.

The  work in this paper is inspired by the orthogonality sampling method (OSM) proposed in~\cite{Potth2010}. This method  is also studied in other works under the name of direct sampling method. While inheriting the advantages of the  aforementioned classical sampling methods (e.g. \cite{Colto1996, Kirsc1998}), the OSM is  attractive and promising thanks to its simplicity and efficiency. More precisely, the OSM is very robust against noise in the data, its stability is easily justified,  its implementation is simple and avoids the need to solve an ill-posed problem. However, the theoretical analysis of the OSM is far less developed compared with that of the classical sampling methods. The majority  of the published results  concerns the case of far-field measurements, see~\cite{Potth2010,Gries2011, Ito2012, Park2018, Leem2018, Harri2019, Ahn2020, Harris2022} for  some results on  the scalar Helmholtz equation  and~\cite{Ito2013, Nguye2019, Harri2020, Kang2021, Le2022} for results on the Maxwell's equations.

The results on the OSM concerning the case of near-field measurements are limited. The OSM studied in~\cite{Akinc2016} is specifically applicable to the 2D case with circular measurement boundaries. The 3D  case was studied in~\cite{Kang2021} under the small volume hypothesis of well-separated inhomogeneities. 
 The imaging functionals introduced in this paper,  along with their associated analysis, are not confined to small scatterers or  circular measurement boundaries. Nevertheless, they do require Cauchy data rather than solely relying on scattered field data. An imaging functional using near-field Cauchy data was studied in~\cite{Harri2022} for anisotropic objects. This functional is related to our imaging functional studied in the scalar case, but its resolution analysis  requires that  the material parameter of the scattering object must be smooth.
 Additionally, instead of using the Green's function as in~\cite{Harri2022} we only employ the imaginary part of the Green's function in the imaging functional. This approach helps  avoid dealing the blow-up singularity of the Green's function when the boundary of the sampling domain approaches the measurement boundary. 
 
  The use of near-field Cauchy data was also studied in~\cite{Le2023} in 
 the context of the isotropic Helmholtz equation with data generated by a single incident wave. This paper can be seen as an extended study of the findings in~\cite{Le2023} to the cases of the anisotropic Helmholtz equation and Maxwell's equations. 
 The latter cases are technically more complex than the isotropic Helmholtz equation case  demanding a meticulous analysis of the resolution and stability of the imaging functional. 
Additionally, it's worth noting that, for the case of Maxwell's equations, we suggest using several polarization vectors, denoted by $\p_n$, involved in the imaging functional that notably improve the  reconstruction results. Our resolution analysis of the imaging functional is based on   the integro-differential equation representation of the scattered field and the Green  representation formula of the imaginary part of the Green's function (or tensor) of the direct scattering problem. We show that the  imaging functional can be expressed as a functional that involves Bessel functions $J_0(k|y-z|)$ or $j_0(k|y-z|)$ and their derivatives where $z$ is a sampling point and $y$ is a point inside the unknown scatterer. These Bessel functions are crucial for  justifying the behavior of the imaging functional. Furthermore, we establish stability estimates for these imaging functionals 
when  noise is present in the near-field scattering data.
Finally, we illustrate the effectiveness of the methods with numerical examples in both 2D and 3D.

The rest of the paper is organized as follows. The second section includes the study of the Helmholtz equation case where we formulate the inverse problem of interest, introduce the imaging functional, analyze its behavior, and showcase numerical examples. Section 3 is dedicated to the case of Maxwell's equations with a structure that is similar to that of the scalar case.

\section{The Helmholtz Equation Case }
\label{sec2}

We first formulate the inverse problem  of interest.
Consider a penetrable inhomogeneous medium that 
occupies a bounded Lipschitz domain $D \subset \mathbb{R}^{n}$ ($n = 2$ or 3).  Assume that 
this medium is 
characterized by the bounded matrix-valued ($n\times n$ matrix) function $Q$ and that $Q = 0I_n$
in $\R^n \setminus \overline{D}$, where $I_n$ is the $n\times n$ identity matrix. Consider the incident plane wave
$$
u_{\mathrm{in}}(x,d) = e^{ikx\cdot d}, \quad  x \in \R^n, \quad d \in  \mathbb{S}^{n-1}: = \{x \in \R^n: |x| = 1\},
$$
where $k>0$ is the wave number and $d$ is the direction vector of propagation. We note that point sources can also be employed as incident waves without affecting the analysis of the reconstruction method studied in this paper.   We consider the following model problem for the scattering of  $u_{\mathrm{in}}(x,d)$ by the  inhomogeneous anisotropic medium
\begin{align}
\label{Helm}
& \div( (I_n + Q) \nabla u) + k^{2} u =0,\quad  x \in \mathbb{R%
}^{n},  \\
& u = u_\sc + u_\mathrm{in}, \\
 \label{radiation}
& \lim_{r\rightarrow \infty }r^{\frac{n-1}{2}}\left( \frac{\partial u_{\mathrm{sc}}}{
\partial r}-iku_{\mathrm{sc}}\right) =0,\quad r=|x|,
\end{align}
where $u(x,d)$ is the total field, $u_\sc(x,d)$ is the scattered field, and the Sommerfeld radiation condition~\eqref{radiation} holds
uniformly for all  directions $x/|x| \in \S^{n-1}$. 
We refer to~\cite{Kirsc2007b} for the well-posedness of this scattering problem. For the study of the inverse problem discussed below we 
 assume that the direct problems is well-posed.

Consider a regular domain $\Omega \subset \R^n$ such that $\overline{D} \subset \Omega$
and denote by $\nu(x)$ the outward normal unit vector to $\partial \Omega$ at $x$.

\textbf{Inverse problem.} 
Given   $u_\sc(\cdot,d_j)$  and $ \partial u_\sc(\cdot,d_j)/ \nu$ on $\partial \Omega$ for 
$d_j \in  \mathbb{S}^{n-1}$ where $j = 1, 2, \dots, N$, determine $D$.

We denote by $\Phi(x,y)$ the free-space Green's function of the scattering problem~\eqref{Helm}--\eqref{radiation}. It is well known that 
\begin{equation} 
\label{green}
 \Phi(x,y)= 
\begin{cases}
\frac{i}{4}H^{(1)}_0(k|x-y|), & \text{in } \R^2 , \\ 
\frac{e^{ik|x-y|}}{4\pi|x-y|}, & \text{in } \R^3.
\end{cases}
\end{equation}
 It is also  known from~\cite{Kirsc2007b} that  the scattered wave $u_\sc$ (weak solution in $H^1_{\mathrm{loc}}(\R^n)$) of 
 problem~\eqref{Helm}--\eqref{radiation} can be described by the following integro-differential   equation
\begin{align}
\label{LS}
u_\sc(x) = \div_x \int_D \Phi(x,y) Q(y) \nabla u(y)\dd y, \quad x \in \R^n.
\end{align}
This   integro-differential   equation formulation for the scattered wave $u_\sc$  is important to the study of the imaging functional in 
the next section.

\subsection{Stable Imaging Functional   }
\label{sec3}
In this section we introduce the  imaging  functional and analyze its properties.  
This functional was introduced in~\cite{Le2023} for the case of the isotropic Helmholtz equation 
with data generated by a single incident wave. For the case of data generated by multiple incident waves
we   define the  imaging  functional  $I(z)$ as
\begin{equation}
\label{I1}
I(z) := \sum_{j  = 1}^N \left| \int_{\partial \Omega} u_\sc(x,d_j)  \frac{\partial\Im \Phi(x,z)}{\partial \nu(x)} - \frac{\partial u_\sc(x,d_j)}{\partial \nu(x)}  \Im \Phi(x,z)\dd s(x)  \right|^p, 
\end{equation}
where  $p \in \N$ is used to sharpen the significant peaks of $I(z)$. 
 
Recall that $J_0$ and $j_0$ are respectively a Bessel function and a spherical Bessel function of the first kind.
The behavior of  $I(z)$ is analyzed in the following theorem.
\begin{theorem}
\label{theorem1}
The imaging  functional $I(z)$  satisfies
\begin{equation}
\label{thm1}
I(z) := \sum_{j  = 1}^N \left| \int_{D}  \mathcal{K}(y,z) \cdot Q(y) \nabla u(y,d_j)\, \dd y \right|^p,
\end{equation}
where 
\begin{equation*} 
\mathcal{K}(y,z) =
\begin{cases}
\frac{1}{4} \nabla_y J_0(k|y-z|), & \text{in } \R^2 , \\ 
\frac{k}{4\pi} \nabla_y j_0(k|y-z|), & \text{in } \R^3.
\end{cases}
\end{equation*}
Furthermore 
\begin{align}
\label{decayrate}
I(z) = O\left(\frac{1}{\mathrm{dist}(z,D)^{\frac{p(n-1)}{2}}} \right)\quad  \text{ as } \mathrm{dist}(z,D) \to \infty,
\end{align}
where $\mathrm{dist}(z,D)$ is the distance from $z$ to $D$. 
\end{theorem}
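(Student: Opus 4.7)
The plan is to first convert the boundary integral in~\eqref{I1} into a volume integral over $D$ via the integro-differential representation~\eqref{LS}. Since $\partial\Omega$ is disjoint from $\overline D$, the outer divergence in~\eqref{LS} can be moved inside, giving
\begin{equation*}
u_\sc(x,d_j) = \int_{D}\nabla_x\Phi(x,y)\cdot Q(y)\nabla u(y,d_j)\,\dd y,\qquad x\in\partial\Omega,
\end{equation*}
together with the analogous identity for $\partial_{\nu(x)} u_\sc(x,d_j)$. Substituting these into~\eqref{I1} and applying Fubini's theorem (the kernels are smooth because $\partial\Omega\cap\overline D=\emptyset$) recasts the quantity inside the modulus as $\int_D F(y,z)\cdot Q(y)\nabla u(y,d_j)\,\dd y$, where
\begin{equation*}
F(y,z) = \int_{\partial\Omega}\bigl[\nabla_x\Phi(x,y)\,\partial_{\nu(x)}\Im\Phi(x,z) - \partial_{\nu(x)}\nabla_x\Phi(x,y)\,\Im\Phi(x,z)\bigr]\,\dd s(x).
\end{equation*}

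Next, I would exploit the translation invariance $\nabla_x\Phi(x,y) = -\nabla_y\Phi(x,y)$ of the free-space Green's function to pull $\nabla_y$ outside the boundary integral (permissible because for $y\in D$ and $x\in\partial\Omega$ the kernels are jointly smooth), yielding $F(y,z) = -\nabla_y H(y,z)$ with
\begin{equation*}
H(y,z) = \int_{\partial\Omega}\bigl[\Phi(x,y)\,\partial_{\nu(x)}\Im\Phi(x,z) - \partial_{\nu(x)}\Phi(x,y)\,\Im\Phi(x,z)\bigr]\,\dd s(x).
\end{equation*}
The key step is the identification of $H(y,z)$. Although $\Im\Phi(\cdot,z)$ does not satisfy the Sommerfeld radiation condition, it is a $C^\infty(\R^n)$ solution of $(\Delta+k^2)v=0$ on all of $\R^n$, so the classical Green representation formula in the bounded domain $\Omega$ applies and gives $H(y,z) = \Im\Phi(y,z)$ for every $y\in D\subset\Omega$. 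Reading off the imaginary part of $\Phi$ from~\eqref{green} via $\Im H_0^{(1)}=J_0$ yields $\Im\Phi(y,z)=\tfrac14 J_0(k|y-z|)$ in $\R^2$ and $\Im\Phi(y,z)=\tfrac{k}{4\pi}j_0(k|y-z|)$ in $\R^3$. Therefore $-\nabla_y H(y,z) = -\mathcal{K}(y,z)$, and since $I(z)$ is defined through moduli the sign is immaterial and~\eqref{thm1} follows.

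Finally, for~\eqref{decayrate} I would invoke the classical large-argument asymptotics $J_0'(r) = -J_1(r) = O(r^{-1/2})$ and $j_0'(r) = O(r^{-1})$, combined with the chain rule $|\nabla_y f(k|y-z|)| = k|f'(k|y-z|)|$ for $f\in\{J_0,j_0\}$. This gives $|\mathcal{K}(y,z)| = O(|y-z|^{-(n-1)/2})$ uniformly in $y\in D$ as $\mathrm{dist}(z,D)\to\infty$. Inserting this bound into~\eqref{thm1}, together with the $z$-independent bound $\|Q\nabla u(\cdot,d_j)\|_{L^1(D)}$, and raising to the $p$-th power produces the claimed decay rate. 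The only delicate ingredient in the entire argument is the application of Green's representation to the non-outgoing function $\Im\Phi(\cdot,z)$; the remaining steps are routine manipulations with Fubini's theorem, the chain rule, and standard Bessel-function asymptotics.
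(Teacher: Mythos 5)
Your proposal is correct and follows essentially the same route as the paper: pull the divergence inside the volume potential, swap the order of integration, use $\nabla_x\Phi=-\nabla_y\Phi$ to factor out a $\nabla_y$, identify the remaining boundary integral as $\Im\Phi(y,z)$ via the Green representation formula applied to the entire (non-radiating but globally smooth) solution $\Im\Phi(\cdot,z)$ of the Helmholtz equation in $\Omega$, and then obtain the decay from the large-argument asymptotics of $J_1$ (respectively $j_0'$). The one point you flag as delicate is exactly the step the paper relies on as well, and your justification of it is the right one.
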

\begin{remark}
For a fixed $y$ in $\R^2$,  we observe that while $|\partial_{y_1}J_0(k|y-z|)|^2, |\partial_{y_2}J_0(k|y-z|)|^2$, and $|\nabla_yJ_0(k|y-z|)|^2$ do not exhibit peaks at $z = y$, 
they do have relatively larger magnitudes in a small neighborhood near $y$. As 
$z$ is away from $y$ these magnitudes decay quickly, as illustrated in Figure~\ref{fi0}. Especially the peaks near $y$ are sharper and stronger for higher values of $k$.  Thus for an extended object $D$, according to Theorem~\ref{theorem1},
we anticipate that the imaging functional $I(z)$ (with, e.g., $p = 2$) will attain higher values when $z$ belongs to $D$, while 
$I(z)$ will be considerably smaller when $z$  lies outside $D$. This is indeed confirmed in the numerical study.

\end{remark}

\begin{figure}[h!!!]
\centering
\subfloat[$|\partial_{y_1}J_0(8|y-z|)|^2$]{\includegraphics[width=5.5cm]{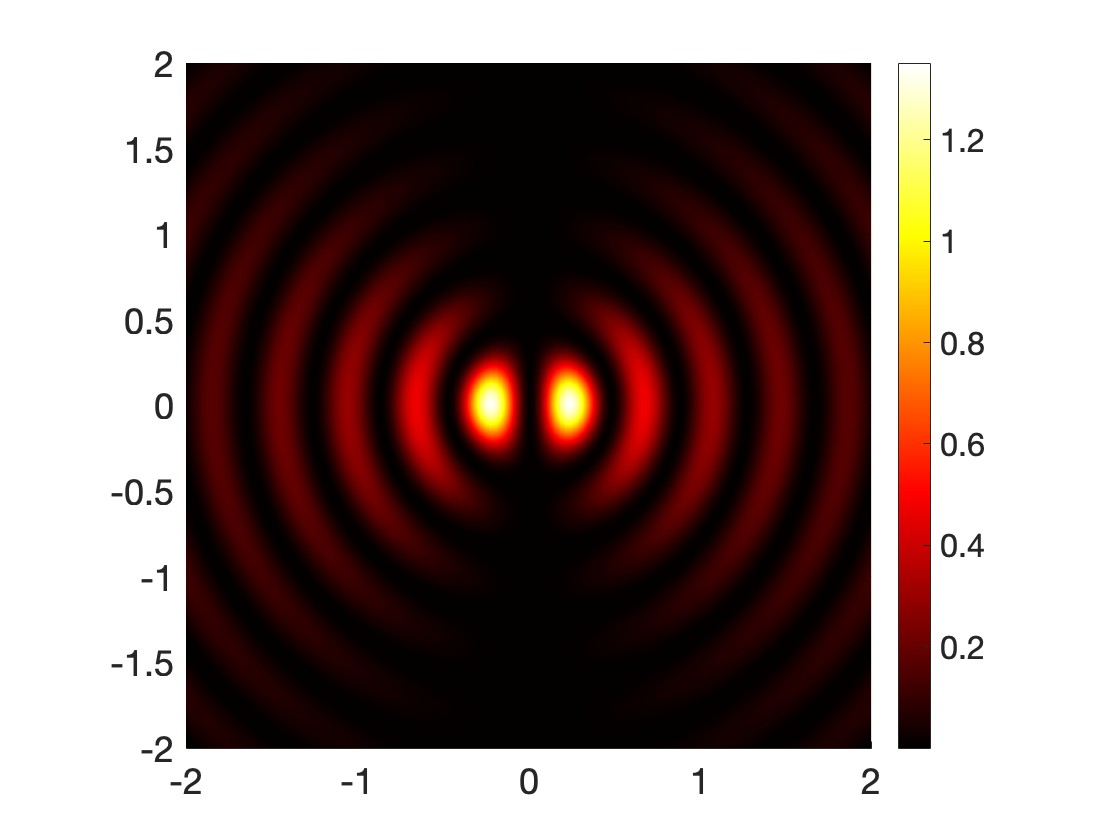}} \hspace{-0.7cm}
\subfloat[$|\partial_{y_2}J_0(8|y-z|)|^2$]{\includegraphics[width=5.5cm]{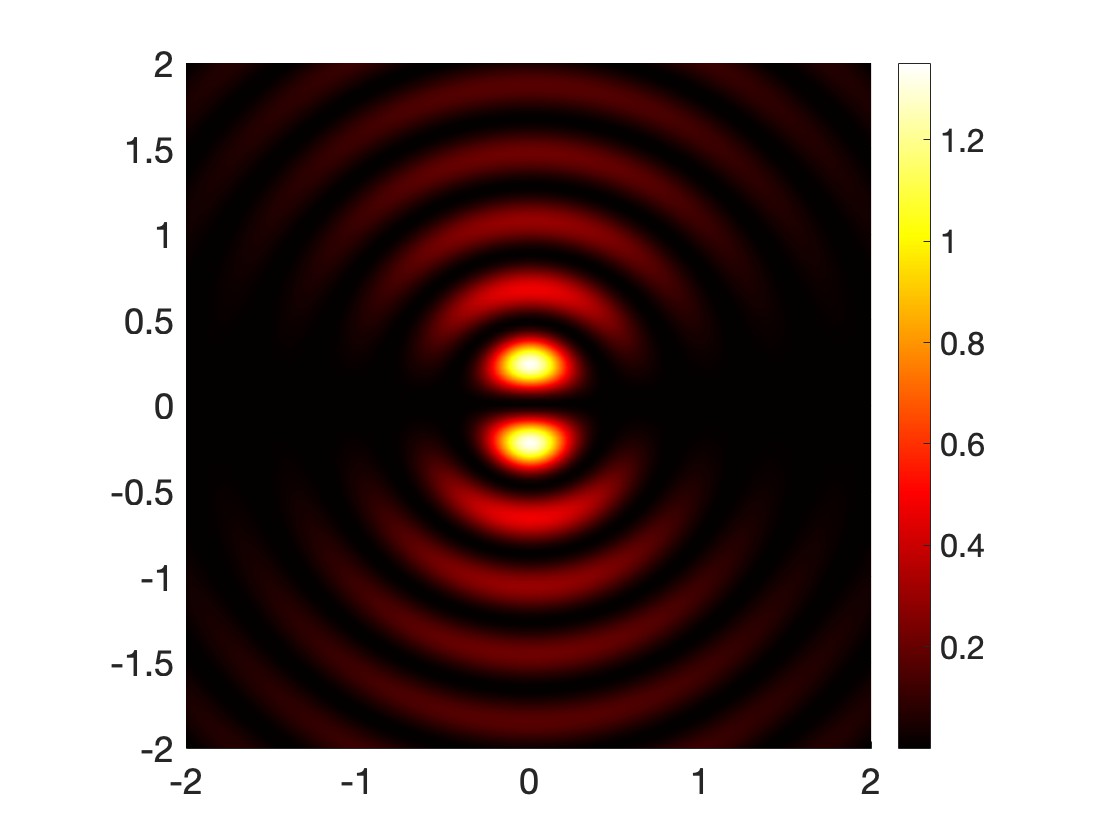}} \hspace{-0.7cm}
\subfloat[$|\nabla_yJ_0(8|y-z|)|^2$]{\includegraphics[width=5.5cm]{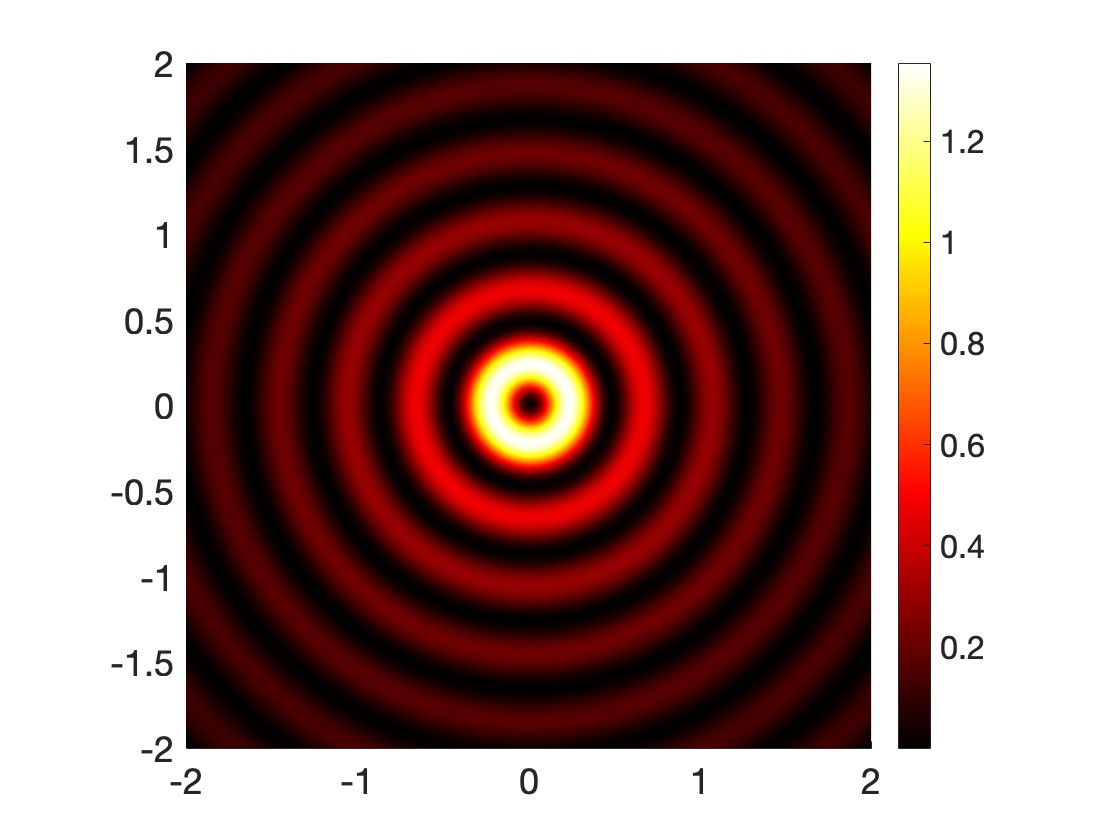}} \hspace{0cm}\\
\subfloat[$|\partial_{y_1}J_0(16|y-z|)|^2$]{\includegraphics[width=5.5cm]{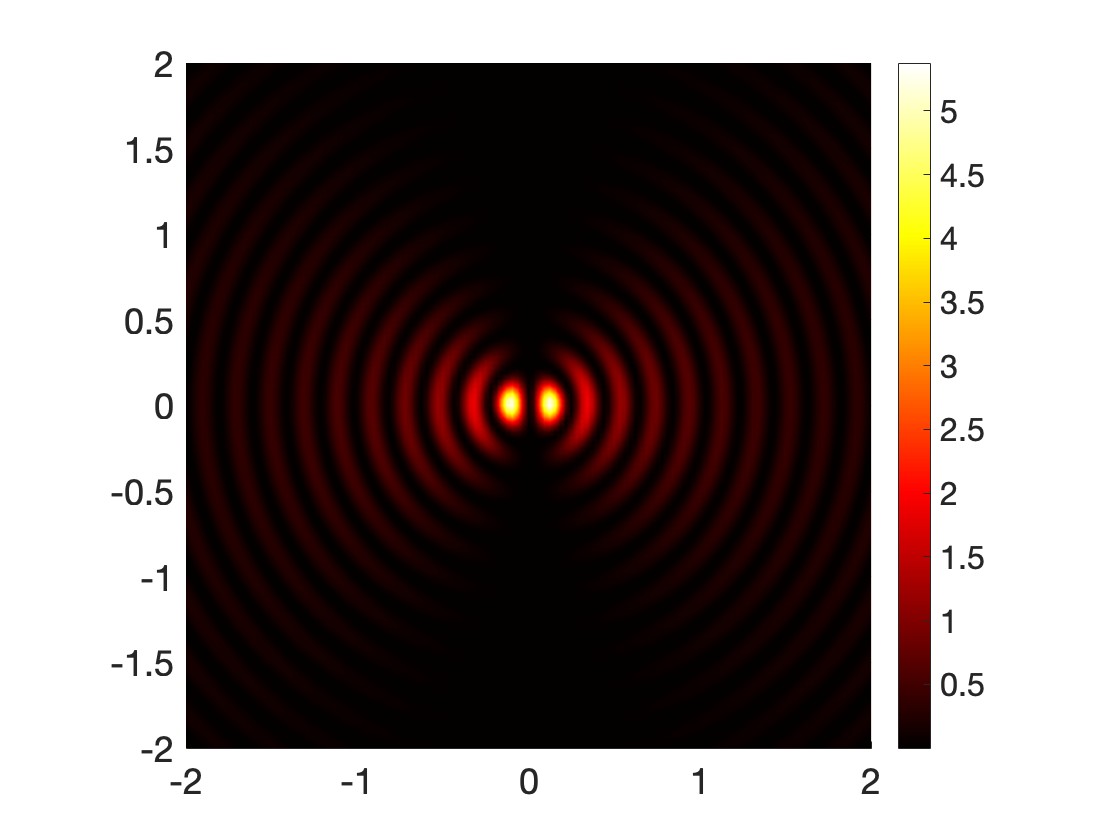}} \hspace{-0.7cm}
\subfloat[$|\partial_{y_2}J_0(16|y-z|)|^2$]{\includegraphics[width=5.5cm]{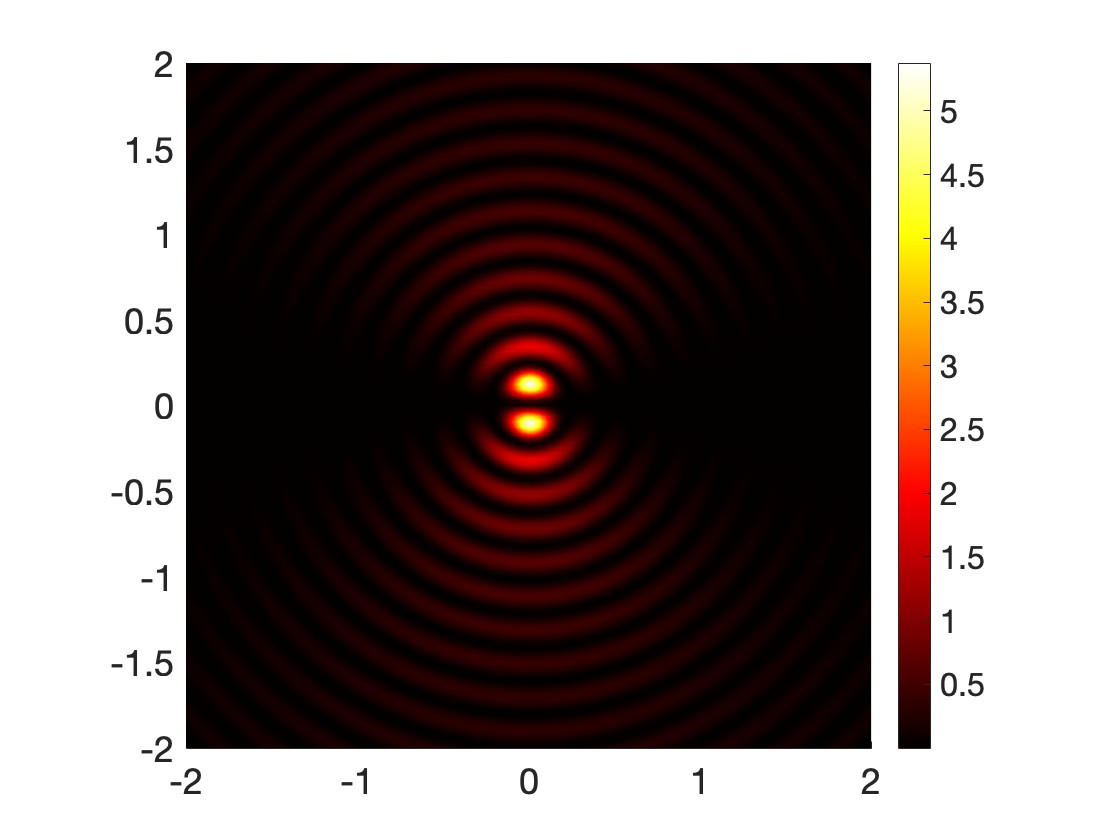}} \hspace{-0.7cm}
\subfloat[$|\nabla_yJ_0(16|y-z|)|^2$]{\includegraphics[width=5.5cm]{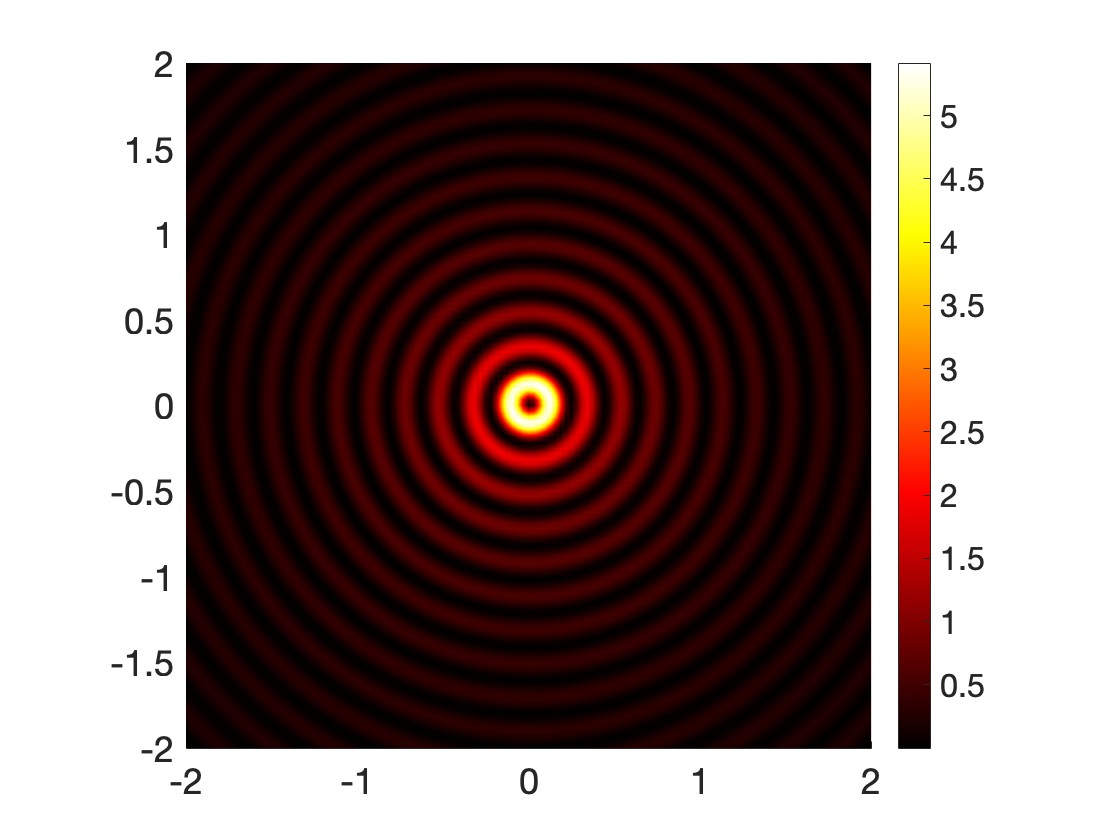}} \hspace{0cm}\\ 
\caption{Plots illustrating   each component and the modulus of  the 2D kernel  $\mathcal{K}(y,z)$ for $y = 0$, $z \in (-2,2)^2$, $k = 8$, and $k = 16$.
First row (a, b, c): plots of $|\partial_{y_1}J_0(k|y-z|)|^2, |\partial_{y_2}J_0(k|y-z|)|^2$ and $|\nabla_yJ_0(k|y-z|)|^2$ for $k = 8$. 
Second row (d, e, f): plots of $|\partial_{y_1}J_0(k|y-z|)|^2, |\partial_{y_2}J_0(k|y-z|)|^2$ and $|\nabla_yJ_0(k|y-z|)|^2$ for $k = 16$.} 
 \label{fi0}
\end{figure}

\begin{proof}
For the sake of simplicity in presentation, we omit the dependence of the total wave $u$ on $d_j$. From the integro-differential   equation~\eqref{LS} and  through a
 direct calculation  we derive that 
\begin{align}
& \int_{\partial \Omega} \left(\frac{\partial \Im \Phi(x,z)}{\partial \nu(x)} u_\sc(x) - \Im \Phi(x,z) \frac{\partial u_\sc(x)}{\partial \nu(x)} \right) \dd s(x) \nonumber \\
 & =  \int_{\partial\Omega} \left (\frac{\partial \Im \Phi(x,z)}{\partial \nu(x)} \div_x \int_D \Phi(x,y) Q(y) \nabla u(y) \right. \dd y  \nonumber \\
 & - \Im \Phi(x,z) \left.  \frac{\partial }{\partial \nu(x)} \div_x \int_D \Phi(x,y) Q(y) \nabla u(y)\dd y \right) \dd s(x)  \nonumber \\
  & =  \int_{\partial\Omega} \left (\frac{\partial \Im \Phi(x,z)}{\partial \nu(x)}  \int_D \nabla_x  \Phi(x,y)  \cdot Q(y) \nabla u(y) \right. \dd y  \nonumber \\
 & - \Im \Phi(x,z) \left.  \frac{\partial }{\partial \nu(x)}  \int_D \nabla_x  \Phi(x,y) \cdot Q(y) \nabla u(y)\dd y \right) \dd s(x).  \nonumber 
\end{align}
Now exchanging the integrals  and employing the fact that $\nabla_x \Phi(x,y) = - \nabla_y \Phi(x,y)$ we derive
\begin{align}
\label{int1}
&\int_{\partial \Omega} \left(\frac{\partial \Im \Phi(x,z)}{\partial \nu(x)} u_\sc(x) - \Im \Phi(x,z) \frac{\partial u_\sc(x)}{\partial \nu(x)} \right) \dd s(x) \nonumber \\
 &= - \int_D \nabla_y  \int_{\partial \Omega}  \left(\frac{\partial \Im \Phi(x,z)}{\partial \nu(x)} \Phi(y,x) - \Im \Phi(x,z) \frac{\partial \Phi(y,x)}{\partial \nu(x)} \right)\dd s(x)  \cdot Q(y) \nabla u(y) \dd y.
\end{align}
It is known (see~\cite{Colto2013}) that for  all $z, y \in \mathbb{R}^n$
$$\Delta \Im \Phi(z,y) + k^2 \Im \Phi(z,y) = 0.$$ 
Hence,  we deduce  from the Green  representation formula that
$$
  \int_{\partial \Omega}  \left(\frac{\partial \Im \Phi(x,z)}{\partial \nu(x)} \Phi(y,x) - \Im \Phi(x,z) \frac{\partial \Phi(y,x)}{\partial \nu(x)} \right)\dd s(x) = \Im \Phi(y,z).
$$
Therefore, substituting this identity in~\eqref{int1} implies
$$
 \int_{\partial \Omega} \left(\frac{\partial \Im \Phi(x,z)}{\partial \nu(x)} u_\sc(x) - \Im \Phi(x,z) \frac{\partial u_\sc(x)}{\partial \nu(x)} \right) \dd s(x)  = - \int_D \nabla_y \Im \Phi(y,z) \cdot Q(y) \nabla u(y) \dd y.
 $$
 Replacing this equation in~\eqref{I1} and letting $ \mathcal{K}(y,z) = \nabla_y \Im \Phi(y,z)$ we derive~\eqref{thm1}. 
 
 Now we justify~\eqref{decayrate} for the two-dimensional case.
 First we have
 \begin{align*}
 \nabla_y J_0(k|y-z|) = - k\left(\frac{(y_1 - z_1)}{|y-z|}, \frac{(y_2 - z_2)}{|y-z|}\right)^\top J_1(k|y-z|),
 \end{align*}
 where $J_1$ is a Bessel function of the first kind and first order. Using this and 
the Cauchy-Schwartz inequality we obtain
 $$
 \left|\int_{D}   \nabla_y J_0(k|y-z|) \cdot Q(y) \nabla u(y,d_j)\, \dd y \right|^p \leq \|k J_1(k|\cdot-z|) \|^p \| Q \nabla u(\cdot,d_j)\|^p
 $$
  Now~\eqref{decayrate} will follow from the well-known asymptotic behavior of $J_1$
 $$
   J_1(k|y-z|) = O\left(\frac{1}{\sqrt{|y-z|}}\right), \quad  \text{ as } |y-z| \to \infty. 
 $$
 The three-dimensional case can be done similarly.
\end{proof}
In   practice  the data are always perturbed with some noise. We assume the noisy data $u^\delta_{\sc}$ and $\partial u^\delta_{\sc}/\partial \nu$ satisfy    
\begin{align}
\label{noise1}
\sum_{j = 1}^N\| u_\sc(\cdot,d_j) - u^\delta_{\sc}(\cdot,d_j) \|_{L^2(\partial \Omega)}  \leq \delta_1,  \\
\label{noise2}
\sum_{j = 1}^N \left\| \frac{\partial u_\sc(\cdot,d_j) }{\partial \nu} - \frac{\partial u^\delta_{\sc}(\cdot,d_j) }{\partial \nu} \right \|_{L^2(\partial \Omega)}  \leq \delta_2,
\end{align}
for some positive constants $\delta_1$ and  $\delta_2 $. We now prove a stability estimate for the imaging  functional  $I(z)$.

\begin{theorem}
\label{stability} 
Denote   by $I^{\delta}(z)$ the imaging functional corresponding to noisy data $ u^\delta_{\sc}$ and $\partial u^\delta_{\sc}/\partial \nu$, i.e. 
$$
I^\delta(z) := \sum_{j  = 1}^N \left| \int_{\partial \Omega} u^\delta_\sc(x,d_j)  \frac{\partial\Im \Phi(x,z)}{\partial \nu(x)} - \frac{\partial u^\delta_\sc(x,d_j)}{\partial \nu(x)}  \Im \Phi(x,z)\dd s(x)  \right|^p.
$$
 Then for all $z \in \R^3$,
\begin{align*}
|I(z) - I^{\delta}(z)| = O(\max\{\delta_1,\delta_2\}) \quad \text{as}\ \max\{\delta_1,\delta_2\}\to 0.
\end{align*}
\end{theorem}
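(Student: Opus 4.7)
The plan is to reduce the estimate for $|I(z) - I^\delta(z)|$ to a linear-in-noise bound for the individual integrals appearing in \eqref{I1}, and then absorb the $|\cdot|^p$ via an elementary mean-value inequality. Denote, for each $j$,
\begin{align*}
F_j(z) &= \int_{\partial \Omega} u_\sc(x,d_j)\,\frac{\partial \Im \Phi(x,z)}{\partial \nu(x)} - \frac{\partial u_\sc(x,d_j)}{\partial \nu(x)}\,\Im \Phi(x,z)\,\dd s(x),\\
F_j^\delta(z) &= \int_{\partial \Omega} u^\delta_\sc(x,d_j)\,\frac{\partial \Im \Phi(x,z)}{\partial \nu(x)} - \frac{\partial u^\delta_\sc(x,d_j)}{\partial \nu(x)}\,\Im \Phi(x,z)\,\dd s(x),
\end{align*}
so that $I(z) = \sum_j |F_j(z)|^p$ and $I^\delta(z) = \sum_j |F_j^\delta(z)|^p$.

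The first step is to bound $|F_j(z) - F_j^\delta(z)|$. By linearity and the Cauchy--Schwarz inequality,
\begin{align*}
|F_j(z) - F_j^\delta(z)| &\le \|u_\sc(\cdot,d_j) - u^\delta_\sc(\cdot,d_j)\|_{L^2(\partial\Omega)}\,\bigl\|\tfrac{\partial \Im \Phi(\cdot,z)}{\partial \nu}\bigr\|_{L^2(\partial\Omega)}\\
&\quad + \bigl\|\tfrac{\partial u_\sc(\cdot,d_j)}{\partial \nu} - \tfrac{\partial u^\delta_\sc(\cdot,d_j)}{\partial \nu}\bigr\|_{L^2(\partial\Omega)}\,\|\Im \Phi(\cdot,z)\|_{L^2(\partial\Omega)}.
\end{align*}
Since $\Im \Phi(\cdot,z)$ is (proportional to) $J_0(k|\cdot-z|)$ in 2D and $j_0(k|\cdot-z|)$ in 3D, it is smooth on $\R^n$; in particular both $\Im \Phi(\cdot,z)$ and $\partial_\nu \Im \Phi(\cdot,z)$ lie in $L^2(\partial\Omega)$ with a norm $C(z)$ depending only on $z$, $k$, and $\partial\Omega$. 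Summing over $j$ and invoking \eqref{noise1}--\eqref{noise2} yields
\begin{equation*}
\sum_{j=1}^N |F_j(z) - F_j^\delta(z)| \le C(z)\,(\delta_1 + \delta_2) \le 2 C(z)\,\max\{\delta_1,\delta_2\}.
\end{equation*}

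The second step handles the $p$-th power via the elementary complex inequality
\begin{equation*}
\bigl||a|^p - |b|^p\bigr| \le p\,\max\{|a|,|b|\}^{p-1}\,|a-b|, \qquad a,b \in \C,\ p \in \N.
\end{equation*}
Applying this to $a = F_j(z)$, $b = F_j^\delta(z)$ and using the previous step shows that $\max\{|F_j(z)|,|F_j^\delta(z)|\} \le |F_j(z)| + C(z)\,(\delta_1+\delta_2)$, which is uniformly bounded in $\delta_1,\delta_2$ for each fixed $z$ (and each $j$). Writing $M(z) := \max_j \max\{|F_j(z)|,|F_j^\delta(z)|\}^{p-1}$, we conclude that
\begin{equation*}
|I(z) - I^\delta(z)| \le \sum_{j=1}^N p\,M(z)\,|F_j(z) - F_j^\delta(z)| \le 2 p\, M(z)\,C(z)\,\max\{\delta_1,\delta_2\},
\end{equation*}
which gives the claimed $O(\max\{\delta_1,\delta_2\})$ behavior as $\max\{\delta_1,\delta_2\} \to 0$. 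No serious obstacle is expected: the only nontrivial point is observing that $\Im \Phi(\cdot,z)$ is smooth (unlike $\Phi$ itself), which keeps $C(z)$ finite for every $z$ and is precisely the reason the authors work with $\Im \Phi$ rather than $\Phi$ in the definition of the imaging functional.
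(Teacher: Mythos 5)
Your proposal is correct and follows essentially the same route as the paper: bound the difference of the boundary integrals linearly in the noise via Cauchy--Schwarz and the assumptions \eqref{noise1}--\eqref{noise2} (using that $\Im\Phi(\cdot,z)$ is smooth, hence has finite $L^2(\partial\Omega)$ norms), then control the $p$-th powers by an elementary factorization of $t^p-s^p$. The only cosmetic difference is that you use the cleaner inequality $\bigl||a|^p-|b|^p\bigr|\le p\max\{|a|,|b|\}^{p-1}|a-b|$, whereas the paper expands the geometric sum $\sum_{m=0}^{p-1}\beta_j^m\alpha_j^{p-1-m}$ and estimates it with $(a+b)^m\le 2^m(a^m+b^m)$; both yield the same $O(\max\{\delta_1,\delta_2\})$ bound.
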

\begin{proof} 
For $j  = 1, 2, \dots, N$, we denote by
\begin{align*}
\alpha_j = \left| \int_{\partial \Omega} u_\sc(x,d_j)  \frac{\partial\Im \Phi(x,z)}{\partial \nu(x)} - \frac{\partial u_\sc(x,d_j)}{\partial \nu(x)}  \Im \Phi(x,z)\dd s(x)  \right|, \\
\beta_j = \left| \int_{\partial \Omega} u^\delta_\sc(x,d_j)  \frac{\partial\Im \Phi(x,z)}{\partial \nu(x)} - \frac{\partial u^\delta_\sc(x,d_j)}{\partial \nu(x)}  \Im \Phi(x,z)\dd s(x)  \right|. 
\end{align*}
Then using the triangle and Cauchy-Schwarz inequalities and~\eqref{noise1}-\eqref{noise2} we estimate that
\begin{align*}
|\alpha_j - \beta_j|
&\leq \left\| \frac{\partial\Im \Phi(\cdot,z)}{\partial \nu}  \right\| \| u_\sc(\cdot,d_j) -  u^\delta_\sc(\cdot,d_j) \| + \|\Im \Phi(\cdot,z)\| \left\|\frac{\partial u_\sc(\cdot,d_j)}{\partial \nu} - \frac{\partial u^\delta_\sc(\cdot,d_j)}{\partial \nu} \right\| \\
&\leq \sqrt{\left\| \frac{\partial\Im \Phi(\cdot,z)}{\partial \nu}  \right\|^2 +  \|\Im \Phi(\cdot,z)\|^2 } \max\{\delta_1,\delta_2\}.
\end{align*}
For the convenience of the presentation we always use $C$ as a generic constant. With the help of the estimate above for $|\alpha_j - \beta_j|$ and the triangle inequality we 
have that
\begin{align*}
|I^\delta(z) - I(z)| &\leq \sum_{j=1}^N \left|\beta_j^p -\alpha_j^p\right| 
\leq \left(\sum_{j=1}^N\left|\beta_j - \alpha_j\right| \right) \left(\sum_{l=1}^N \left| \sum_{m=0}^{p-1} \beta_j^m \alpha_j^{p-1-m}\right|\right) \\
&\leq C \max\{\delta_1,\delta_2\} \sum_{j=1}^N \sum_{m=0}^{p-1} (\left|\beta_j - \alpha_j\right|+\alpha_j)^m \alpha_j^{p-1-m}.
\end{align*}
Then denoting $\alpha_{\max} = \max\limits_{j = 1,\dots,N} \alpha_j$ and using the estimate $(a+b)^m \leq 2^m(a^m+b^m)$ for nonnegative numbers $a, b$ and $m = 0, \dots, p-1$ 
we continue to estimate that
\begin{align*}
|I^\delta(z) - I(z)| &\leq C \max\{\delta_1,\delta_2\} \sum_{j=1}^N \sum_{m=0}^{p-1}2^m (\left|\beta_j - \alpha_j \right|^m+\alpha_j^m)\alpha_j^{p-1-m} \\
&\leq C \max\{\delta_1,\delta_2\} \sum_{m=0}^{p-1} 2^m \alpha_{\max}^{p-1-m} \sum_{j=1}^N (\left| \beta_j - \alpha_j \right|^m+\alpha_{\max}^m) \\
&\leq C \max\{\delta_1,\delta_2\} \sum_{m=0}^{p-1}2^m \left(C^m\max\{\delta_1,\delta_2\}^m \alpha_{\max}^{p-1-m}+N\alpha_{\max}^{p-1}\right)\\
& = O(\max\{\delta_1,\delta_2\}),\quad \text{as}\ \max\{\delta_1,\delta_2\}\to 0.
\end{align*}
This completes the proof.
\end{proof}

\begin{remark}
It is important to observe that if the far-field measurements are acquired along the boundary of a large-radius ball, 
utilizing the radiation condition allows us to  approximate $\partial u_\sc/\partial \nu$ by $ik u_\sc$ 
within  $I(z)$. Then the modified imaging functional
\begin{equation}
\label{Ifar}
I_{\mathrm{far}}(z) := \sum_{j  = 1}^N \left| \int_{\partial \Omega} u_\sc(x,d_j)  \frac{\partial\Im \Phi(x,z)}{\partial \nu(x)} - ik u_\sc(x,d_j)  \Im \Phi(x,z)\dd s(x)  \right|^p 
\end{equation}
only needs  
the scattered field data $u_\sc(x,d)$ and approximates $I(z)$.
\end{remark}

\subsection{Numerical Examples}
\label{sec5}
In this section we study the numerical performance of the imaging functional~\eqref{I1}  for  simulated data in two dimensions. More precisely, we will examine the performance of the functional  for data with different wave numbers (Figure~\ref{fi1}), different number of incident waves (Figure~\ref{fi2}), and different powers $p$ (Figure~\ref{fi3}).
To ensure a consistent comparison, the imaging functional is normalized by dividing it with its maximal value. Its value is thus between 0 and 1.
  
  The following common parameters  are used in the numerical examples 
\begin{align*}
 &\partial \Omega = \{ (x_1,x_2)^\top \in \R^2: x_1^2 + x_2^2 = 3^2 \}, \\
  & \text{Number of data points on } \partial \Omega \text{ : } 64  \ (\text{uniformly distributed}), \\
&\text{Sampling  domain}: (-2,2)\times(-2,2), \\
&\text{Number of sampling points}: 96\times 96.
\end{align*}
  The following scattering objects are considered in the numerical examples.
  
\noindent
a) Kite-ellipse  object 
\begin{align*}
\mathrm{kite} &= \{x \in  \R^2: x=((\cos(t) + 0.65\cos(2t) - 0.65)/2,  1.5\sin(t)/2.5)^\top,  0\leq t \leq 2\pi\}, \\
\mathrm{ellip} & = \{(x_1,x_2)^\top \in \R^2: (x-1)^2/0.4^2 + (x_2 +0.75)^2/0.6^2 < 1  \} \\
D &= \mathrm{disk} \cup \mathrm{rectangle},  \\ 
Q(x)  &= \mathrm{diag}(0.5,0.7) \quad \text{in } D.
\end{align*}
b) Rectangle  object 
\begin{align*}
D &= \{(x_1,x_2)^\top \in \R^2: |x_1-0.2| < 0.6, |x_2-0.2| < 0.6\}, \\
Q(x)  &= \mathrm{diag}(0.5,0.7) \quad \text{in } D.
\end{align*}
Here for simplicity we consider $Q$ as a diagonal matrix with constant entries in $D$. It's worth noting that the well-posedness of the direct problem (see~\cite{Kirsc2007b}) requires that  $Q+I_n$ is  symmetric and positive definite. 
Using $N$ incident plane waves and measuring the data at $64$ points on $\partial \Omega$ for each incident wave, 
the Cauchy data $u_\sc(x,d), \partial u_\sc/ \partial \nu(x,d)$, where $(x,d) \in \partial \Omega \times \S$, are then $N \times 64$ matrices. 
The artificial noise is simulated as 
two complex-valued noise matrices $\mathcal{N}_{1,2}$ 
containing random numbers that are uniformly distributed in the complex square 
$$\{ a+ i b\, : \, |a| \leq 1, \, |b| \leq 1 \} \subset \C.$$ For simplicity we consider the same noise level  $\delta$ for both $u_\sc$ and $\partial u_\sc/ \partial \nu$.  
The  noisy data $u_\sc^\delta$ and $\partial u^\delta_\sc/ \partial \nu$ are given by
\begin{align*}
u_\sc^\delta 
 := u_\sc 
   + \delta\frac{\mathcal{N}_1}{\|\mathcal{N}_1\|_F} \|u_\sc  \|_F, \quad 
\frac{\partial u_\sc^\delta }{\partial \nu} := \frac{\partial u_\sc }{\partial \nu}  
   + \delta\frac{\mathcal{N}_2}{\|\mathcal{N}_2\|_F} \left\|\frac{\partial u_\sc }{\partial \nu} \right \|_F,
\end{align*}
where $\|\cdot\|_F$ is the Frobenius matrix norm.

\subsubsection{Reconstruction with different wave numbers (Figure \ref{fi1})}
We present the reconstruction results in Figure~\ref{fi1}   for the wave numbers $k = 8$ (wavelength $\approx$ 0.78) 
and $k = 16$ (wavelength $\approx$ 0.39).  For the data, we use 32 incident waves and 
 introduce  2$\%$ noise ($\delta = 0.02$). This example intentionally employs a small amount of noise in the data to distinctly display the impact of using different wave numbers in the reconstruction process. Figure~\ref{fi1} illustrates that the reconstruction results notably improve with higher values of $k$. 

\begin{figure}[h!!!]
\centering
\subfloat[True geometry]{\includegraphics[width=5.5cm]{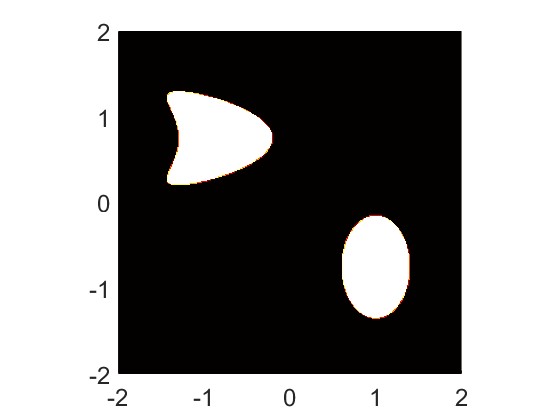}} \hspace{-0.6cm}
\subfloat[$k = 8$]{\includegraphics[width=5.5cm]{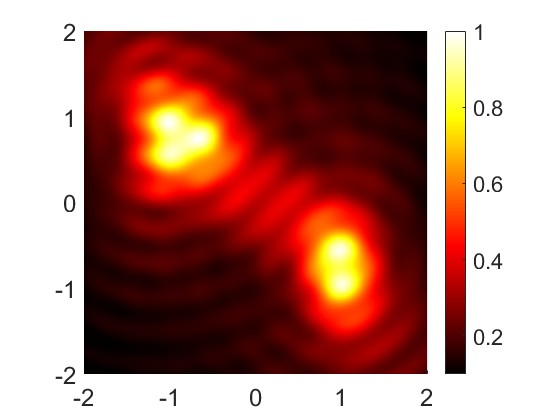}} \hspace{-0.6cm}
\subfloat[$k= 16$]{\includegraphics[width=5.5cm]{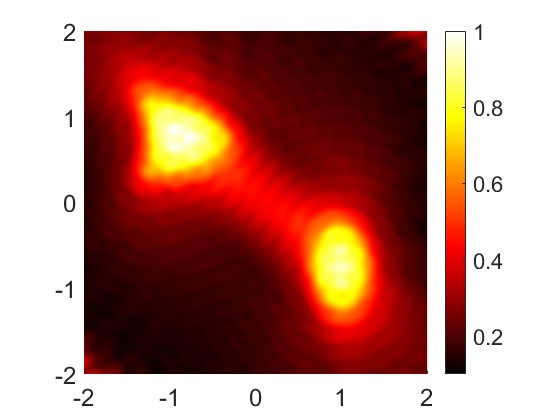}} \hspace{0cm}\\
\subfloat[True geometry]{\includegraphics[width=5.5cm]{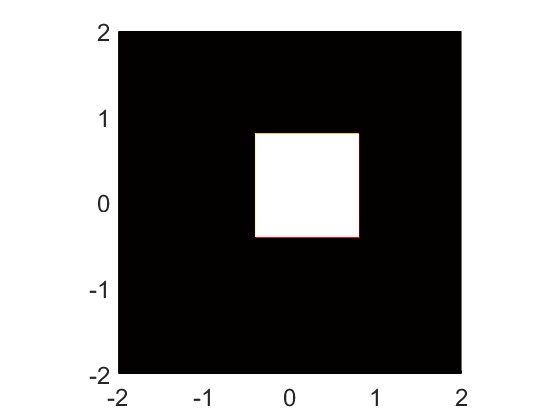}}  \hspace{-0.6cm}
\subfloat[$k = 8$]{\includegraphics[width=5.5cm]{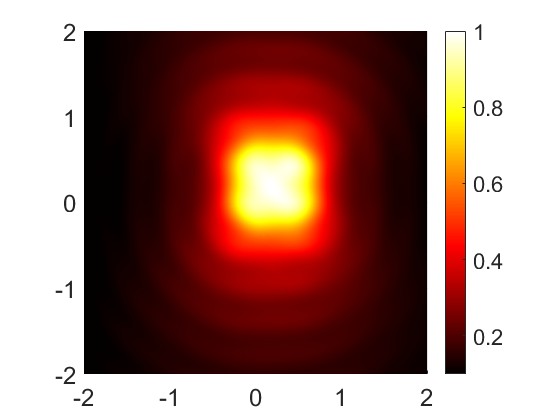}} \hspace{-0.6cm}
\subfloat[$k = 16$]{\includegraphics[width=5.5cm]{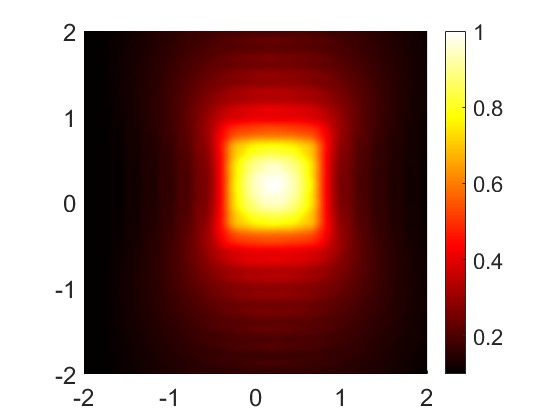}}  
\caption{Reconstruction from   data associated with different wave numbers.  
The data is generated using 32 incident waves and
has a  2$\%$ noise level.
First column (a, d): true geometry. Second column (b, e): reconstruction with $k = 8$.
Third column (c, f): reconstruction with $k = 16$.} 
 \label{fi1}
\end{figure}

\subsubsection{Reconstruction with different numbers of incident waves (Figure \ref{fi2})}
We present the reconstruction results in Figure~\ref{fi2}  for noisy data using 8 incident waves and 32 incident waves. The data is added with 
 20$\%$ noise and correspond to  the wave number $k  = 16$. It is clearly seen that the results significantly improve  with an increased count of incident waves used to generate the 
 data.  However, it is worth noting that the results will remain relatively stable even as more incident waves are used in the reconstruction process.

\begin{figure}[h!!!]
\centering
\subfloat[True geometry]{\includegraphics[width=5.5cm]{kite_ellip_true}} \hspace{-0.6cm}
\subfloat[$N = 8$]{\includegraphics[width=5.5cm]{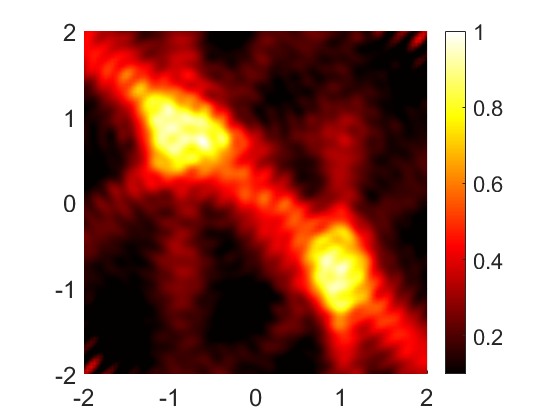}} \hspace{-0.6cm}
\subfloat[$N= 32$]{\includegraphics[width=5.5cm]{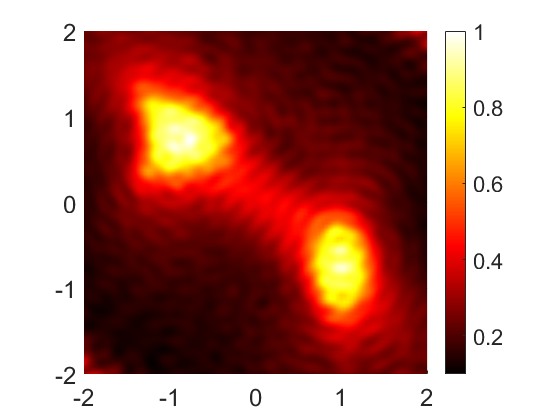}} \hspace{-0.6cm}\\
\subfloat[True geometry]{\includegraphics[width=5.5cm]{rec_true}}  \hspace{-0.6cm}
\subfloat[$N = 8$]{\includegraphics[width=5.5cm]{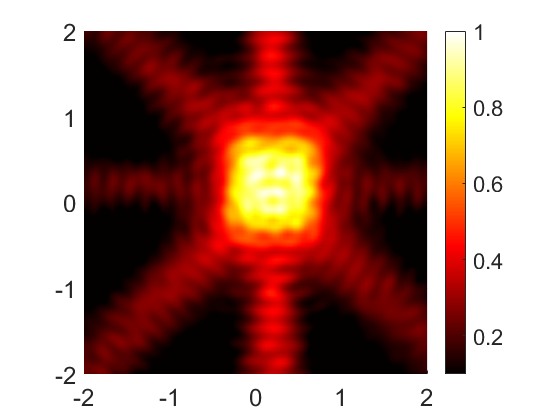}} \hspace{-0.6cm}
\subfloat[$N = 32$]{\includegraphics[width=5.5cm]{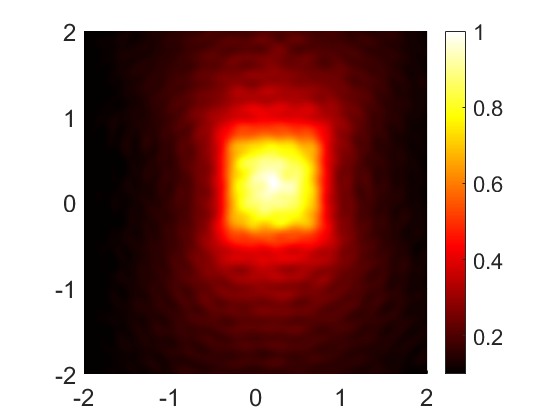}}  
\caption{Reconstruction with noisy  data using different numbers of incident waves.  
The wave number is $k = 16$ and the data is added by 20$\%$ noise.
First column (a, d): true geometry. Second column (b, e): reconstruction using 8 incident waves ($N = 8$).
Third column (c, f): reconstruction using 32 incident waves ($N = 32$).} 
 \label{fi2}
\end{figure}

\subsubsection{Reconstruction with different  powers $p$ (Figure \ref{fi3})}
Figure~\ref{fi3} presents the reconstruction results using different values of $p$ in $I(z)$. We consider a 20\% noise level in the data, a wave number of $k = 16$, and again, 32 incident waves. While the reconstructions with $p = 1$ display reasonable locations and shapes of the targets, they are not as sharp or visually appealing as those in the case of $p = 2$ shown in Figure~\ref{fi2}. The results with $p = 4$ are even less accurate in terms of the shape of the computed objects. These two experiments indicate that using $p = 2$ seems to yield the most reasonable results.

\begin{figure}[h!!!]
\centering
\subfloat[True geometry]{\includegraphics[width=5.5cm]{kite_ellip_true}} \hspace{-0.6cm}
\subfloat[ $p = 1$]{\includegraphics[width=5.5cm]{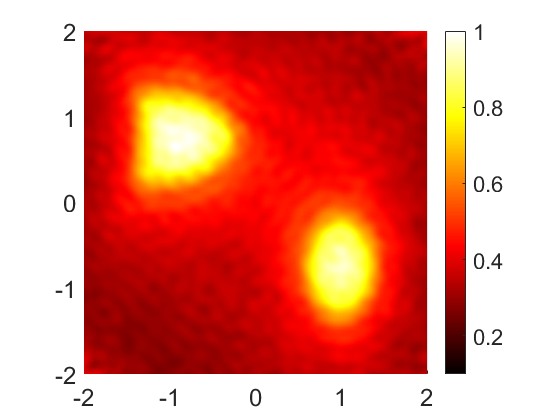}}  \hspace{-0.6cm} 
\subfloat[ $p = 4$]{\includegraphics[width=5.5cm]{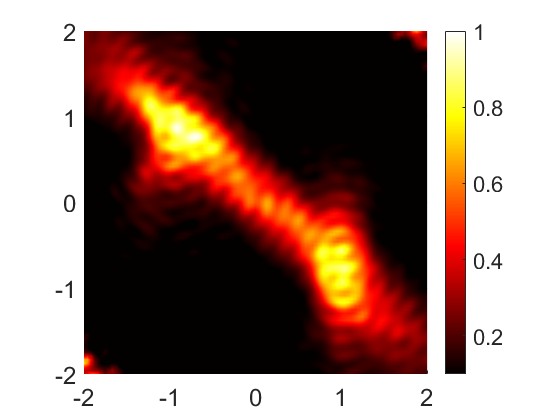}} \\
\subfloat[True geometry]{\includegraphics[width=5.5cm]{rec_true}} \hspace{-0.6cm}
\subfloat[$p = 1$]{\includegraphics[width=5.5cm]{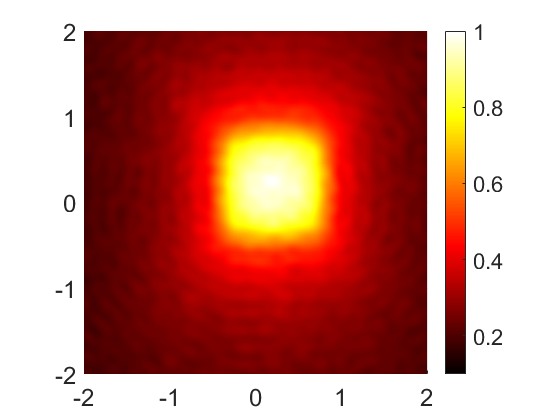}}  \hspace{-0.6cm} 
\subfloat[ $p = 4$]{\includegraphics[width=5.5cm]{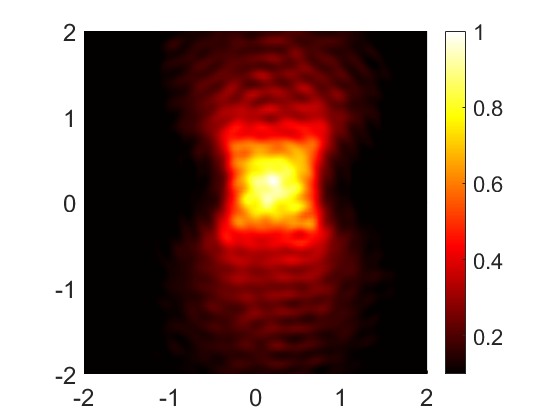}}  
\caption{Reconstruction  with  noisy data (20$\%$ noise) using different values of $p$, 32 incident waves, wave number $k = 16$.  
First column (a, d): true geometry. Second column (b, e): reconstruction with $p = 1$.
Third column (c, f): reconstruction with $p = 4$.} 
 \label{fi3}
\end{figure}

\section{The Case of Maxwell's Equations}

We consider the scattering of time-harmonic electromagnetic waves at  positive wave number
$k$ from a non-magnetic  inhomogeneous medium. Suppose that there is no free charge and current density. 
Then, the electric field $ \E$ and the magnetic field $\H$ satisfy the Maxwell's equations 
\begin{align}
\label{eq:Maxwell}
\curl \E - i k  \H = 0 \quad \text{ and } \quad \curl \H +  ik \eps \E = 0 \quad \text{in } \R^3,
\end{align}
where  $\eps$ is the electric permittivity 
of the medium (the magnetic permeability is assumed to be 1). The permittivity $\eps$ is assumed to be a bounded  matrix-valued function. 
Let $D$ be a bounded domain occupied by the non-magnetic inhomogeneous medium. The medium outside 
$D$ is assumed to be
 homogeneous which means  
 $\eps = I_3$ outside $D$. Eliminating magnetic field $\H$ from~\eqref{eq:Maxwell} we obtain  
\begin{equation}
  \label{eq:Order2Total}
  \curl \curl \E   - k^2 \eps  \E  = 0, \quad   \text{in } \R^3.
\end{equation}
Assume that the inhomogeneous  medium is illuminated by the incident electric and magnetic incident fields $ \E_\mathrm{in}$ and $\H_\mathrm{in}$, 
respectively, satisfying
\begin{align*}
 \curl \H_\mathrm{in} + i k \Ei = 0 \quad \text{ and} \quad \curl \Ei - i k \H_\mathrm{in} = 0, \quad \text{in } \R^3.
\end{align*}
Then  there arises the scattered electric field 
$\u$, defined by $\u:= \E - \Ei$. 
Since the incident field $\Ei$ satisfies the homogeneous Maxwell equation with wave number $k$ given by 
$$\curl\curl \Ei - k^2  \Ei= 0, \quad   \text{in } \R^3$$
 subtracting this equation from~\eqref{eq:Order2Total} we can conclude that 
the scattered field  satisfies
\begin{align}
\label{secondorder}
  \curl \curl \u - k^2 \eps  \u =  k^2P\Ei  \quad  \text{in } \R^3,
\end{align}
where the contrast $P$ is defined by
\begin{equation*}
 P := \eps - I_3.
\end{equation*}
Therefore, $P = 0$ in $\R^3 \setminus \overline{D}$. 
We complete the scattering problem by the  Silver-M\"{u}ller radiation condition for the scattered field $\u$ given by 
\begin{align}
 \label{radcond}
\frac{\x}{|\x|}  \times \curl \u  - ik \u = \mathcal{O}(|\x|^{-2}) \quad \text{as } |\x| \rightarrow \infty, 
\end{align}
which is assumed to hold uniformly with respect to $\x/|\x|$. For the study of the inverse problem discussed below we assume that
the problem~\eqref{secondorder}--\eqref{radcond} is  well-posed. The well-posedness
of this direct problem was studied in~\cite{Kirsc2007a}.

Let $\Omega$ be a regular domain such that $\overline{D} \subset \Omega$ and $\nu$ be the outward normal unit on $\partial \Omega$. Consider 
$N$ incident fields $\Ei$ as plane waves
$$
\Ei(\x,\di_j)= \q_j e^{ik \di_j \cdot \x}, \quad \di_j \in \mathbb{S}^2, \quad j = 1, \dots, N,
$$
where  $\q_j$ is the polarization vector in $\R^3$ that is orthogonal to $\di_j$. 
We consider the following inverse problem.

\noindent
\textbf{Inverse problem.}  Given $\u(\cdot,\di_j)$ and $\nu  \times \curl \u(\cdot,\di_j)$ on  $\partial \Omega$ for $\di_j  \in \mathbb{S}^2$ where $j = 1,\dots, N$,
determine $D$.

\begin{remark}
From Maxwell equations~\eqref{eq:Maxwell}, it is evident  that the data $\u$ and $\nu \times \curl  \u$ can be respectively obtained  through the measurement of the electric field $\E$ and the tangential components of the magnetic field, $\nu \times \H$, on $\partial \Omega$.
\end{remark}
Similar to the scalar case, an integro-differential equation formulation for the scattered field $\u$ is needed to investigate the imaging functional. To this end, we need
the Green's tensor of the scattering problem~\eqref{secondorder}--\eqref{radcond} that is given by
\begin{align}
\label{tensor}
\mathbb{G}(\x,\y) = \Phi(\x,\y)I_3 + \frac{1}{k^2} \nabla_\x \div_\x(\Phi(\x,\y)I_3), \quad  \x\neq \y,
\end{align}
where 
$$
\Phi(\x,\y) = \frac{e^{ik|\x-\y|}}{4\pi|\x-\y|}.
$$
It is  known from~\cite{Kirsc2007a} that  the  scattered field $\u$ (weak solution in $H_{\mathrm{loc}}(\curl, \R^3)$)  in problem~\eqref{secondorder}--\eqref{radcond} can be described by the following the integro-differential equation 
\begin{align}
\label{LS2}
\u(\x) = (k^2 + \nabla \div) \int_{D} \Phi(\x,\y)  P(\y) \E(\y) \d \y, \quad  \x \in\R^3.
\end{align}

\subsection{Stable Imaging Functional}
Let $\p_n$ be vectors in $\S^2$, $n = 1, 2, \dots, M$. We define the imaging functional 
\begin{align}
\label{I2}
\mathcal{I}(\z) := \sum_{n = 1}^M \sum_{j=1}^N &\left| \int_{\partial \Omega} \left[ \nu \times \curl(\Im \mathbb{G}(\x,\z)\p_n) \right]\cdot \u(\x,\di_j) 
 - \Im \mathbb{G}(\x,\z)\p_n \cdot \left[ \nu \times \curl  \u(\x,\di_j) \right] \d s(\x)\right|^p, 
\end{align}
where $ \z \in \R^3$ and $p \in \N$ is again used to sharpen the significant peaks of $\mathcal{I}(\z)$.
\begin{remark}
Based on  numerical observations of the kernel function $\mathcal{K}(\y,\z,\p_n)$ in~\eqref{kernel3d} (see Figure~\ref{fi00}) and the numerical study of the imaging functional (see Figure~\ref{fi5}), using  multiple vectors $\p_n$ tends to yield improved reconstruction results. Moreover, similar to the scalar case, when the measurements are acquired along the boundary of a large-radius ball, utilizing the radiation condition allows us to  approximate $\nu \times \curl  \u$ by $ik \u$ 
in~\eqref{I2}.
\end{remark}

Recall that $j_0$ is the spherical Bessel function of the first kind. We study the behavior of the imaging functional  $\mathcal{I}(\z)$ in the following theorem.
\begin{theorem}
The imaging functional satisfies 
\begin{align}
\label{thm2}
\mathcal{I}(\z) =  \sum_{n = 1}^M \sum_{j=1}^N  \left|  \int_{D} \mathcal{K}(\y,\z, {\p_n}) \cdot P(\y) \E(\y, \di_j) \d \y \right|^p,
\end{align}
where
\begin{align}
\label{kernel3d}
\mathcal{K}(\y,\z, {\p_n}) &= - \frac{k\p_n| \y - \z|^2-k(\p_n\cdot ( \y - \z))(\y - \z)}{4\pi|\y - \z|^2}j_0(k| \y - \z|) \nonumber \\
&+ \frac{3(\p_n\cdot ( \y - \z))( \y - \z)-\p_n| \y - \z|^2}{k 4\pi | \y - \z|^4}(\cos(k| \y - \z|)-j_0(k| \y - \z|)).
\end{align}
Furthermore 
\begin{align}
\label{decay2}
\mathcal{I}(\z) = O\left(\frac{1}{\mathrm{dist}(\z,D)^p} \right)\quad  \text{ as } \mathrm{dist}(\z,D) \to \infty.
\end{align}
\end{theorem}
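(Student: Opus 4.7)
The plan mirrors the argument of Theorem~\ref{theorem1}, carried out in the vectorial setting of Maxwell's equations. First, I would use the identity $(k^2+\nabla_\x\div_\x)(\Phi(\x,\y)\q) = k^2\mathbb{G}(\x,\y)\q$, valid for any constant vector $\q$, to rewrite the integro-differential representation~\eqref{LS2} as $\u_\sc(\x,\di_j) = k^2\int_D \mathbb{G}(\x,\y)P(\y)\E(\y,\di_j)\,\d\y$ for $\x\in\partial\Omega$, and analogously for $\curl\u_\sc$, where the curl is taken in $\x$. Substituting into the boundary integrand of~\eqref{I2} and swapping integrals by Fubini, each summand of $\mathcal{I}(\z)$ becomes the $p$-th power of
\[
k^2\!\int_D \left(\int_{\partial\Omega}\!\bigl\{[\nu\times\curl(\Im\mathbb{G}(\x,\z)\p_n)]\cdot\mathbb{G}(\x,\y)P(\y)\E(\y,\di_j) - \Im\mathbb{G}(\x,\z)\p_n\cdot[\nu\times\curl(\mathbb{G}(\x,\y)P(\y)\E(\y,\di_j))]\bigr\}\,\d s(\x)\right)\d\y.
\]

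Second, I would collapse the inner boundary integral using the vectorial second Green's identity on $\Omega\setminus B_\epsilon(\y)$ and then letting $\epsilon\to 0$. The field $\v(\x):=\Im\mathbb{G}(\x,\z)\p_n$ is an entire solution of $\curl\curl\v - k^2\v = 0$ in $\R^3$ (the Dirac singularity of $\mathbb{G}$ is purely real), while $\w(\x) := \mathbb{G}(\x,\y)P(\y)\E(\y,\di_j)$ solves the same system in $\R^3\setminus\{\y\}$. Consequently the volume contribution in Green's identity vanishes on $\Omega\setminus B_\epsilon(\y)$ and only the small-sphere residue about $\y$ survives; the standard Stratton-Chu calculation identifies this $\epsilon\to 0$ limit with $\v(\y)\cdot P(\y)\E(\y,\di_j) = \Im\mathbb{G}(\y,\z)\p_n\cdot P(\y)\E(\y,\di_j)$. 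This gives the representation~\eqref{thm2} with $\mathcal{K}(\y,\z,\p_n)$ proportional to $\Im\mathbb{G}(\y,\z)\p_n$, up to an overall scalar constant (a power of $k$) that is absorbed in the normalization since the functional uses the modulus to the $p$-th power.

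Third, I would convert this abstract kernel into the explicit formula~\eqref{kernel3d}. From $\Im\Phi(\y,\z) = \frac{k}{4\pi}j_0(k|\y-\z|)$ and~\eqref{tensor}, writing $r = |\y-\z|$ and $\hat{r} = (\y-\z)/r$, the key intermediate computation is
\[
\nabla_\y\div_\y(\phi(r)\p_n) = [\phi''(r)-\phi'(r)/r](\p_n\cdot\hat{r})\hat{r} + [\phi'(r)/r]\p_n
\]
with $\phi(r) = \frac{k}{4\pi}j_0(kr)$. Using $j_0'(r) = -j_1(r)$, the recurrence $j_1'(r) = j_0(r) - 2j_1(r)/r$, and the identity $r j_1(r) = j_0(r) - \cos(r)$, these derivatives split into a $j_0(kr)$-part and a $(\cos(kr)-j_0(kr))$-part which, after regrouping by $\p_n$ and $(\p_n\cdot(\y-\z))(\y-\z)$, produce exactly the two terms of~\eqref{kernel3d}. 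The main technical obstacle is this algebraic reduction: one must carefully track tensorial factors, radial derivatives, and the signs introduced by $\nabla_\x = -\nabla_\y$ when re-expressing everything in $\y$.

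Finally, for the decay estimate~\eqref{decay2} I would bound each piece of $\mathcal{K}$ separately, with $r=|\y-\z|$. The tangential-projection prefactor of the first term has norm at most $k|\p_n|/(2\pi)$ uniformly in $r$, and $|j_0(kr)|\le 1/(kr)$ gives decay $\mathcal{O}(1/r)$; the vector prefactor of the second term has norm at most $|\p_n|/(k\pi r^2)$ and $|\cos(kr)-j_0(kr)|$ is uniformly bounded, giving $\mathcal{O}(1/r^2)$. Therefore $|\mathcal{K}(\y,\z,\p_n)| = \mathcal{O}(1/\mathrm{dist}(\z,D))$ uniformly in $\y\in D$, and bounding the integral over $D$ in~\eqref{thm2} by this pointwise estimate (times $\|P\E(\cdot,\di_j)\|_{L^1(D)}$) and raising to the $p$-th power yields $\mathcal{I}(\z) = \mathcal{O}(\mathrm{dist}(\z,D)^{-p})$.
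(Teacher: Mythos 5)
Your proposal is correct and follows essentially the same route as the paper: rewrite the data via the Green's tensor representation of $\u$ and $\nu\times\curl\u$, swap the order of integration, collapse the boundary integral to $\Im\mathbb{G}(\y,\z)\p_n\cdot P(\y)\E(\y,\di_j)$ by a vector Green's identity exploiting that $\Im\mathbb{G}(\cdot,\z)\p_n$ solves $\curl\curl\v=k^2\v$ everywhere while $\mathbb{G}(\cdot,\y)\q$ carries the Dirac source, then compute the kernel explicitly and bound it by $O(1/|\y-\z|)$ using the decay of $j_0$. The only cosmetic differences are that the paper works with the distributional identity $\curl\curl(\mathbb{G}\q)-k^2\mathbb{G}\q=-\delta(\x-\y)\q$ directly rather than excising a small ball, and your bookkeeping of the overall $k^2$ factor is slightly looser (a constant prefactor survives the $p$-th power, but it is immaterial since the functional is normalized), neither of which affects the argument.
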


\begin{remark}
Similar to the scalar case, the kernel $\mathcal{K}(\y,\z, {\p_n})$ exhibits the desired peak property in magnitude which  partially explain the behavior of $\mathcal{I}(\z)$. 
The peak of the kernel in this case is even better than that of the scalar case, as seen in~Figure~\ref{fi00}. Moreover, the summation of  $|\mathcal{K}(\y,\z, {\p_n})|^2$ over multiple 
vectors $\p_n$ results in a stronger and more rounded peak,  potentially enhancing the performance of the imaging functional. This observation is confirmed in the numerical study.

\end{remark}

\begin{figure}[h!!!]
\centering
\subfloat[$|\mathcal{K}(0,\z, \p_1)|^2$]{\includegraphics[width=5.5cm]{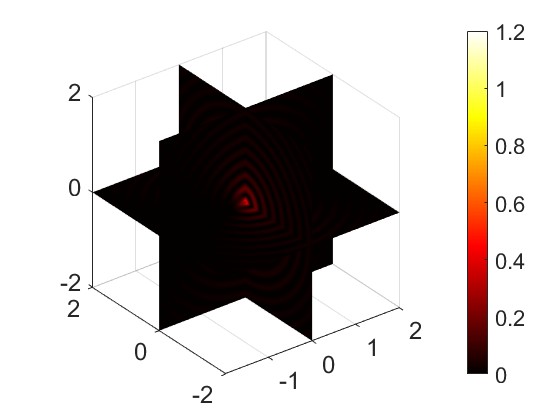}} \hspace{-0.4cm}
\subfloat[$|\mathcal{K}(0,\z, \p_2)|^2$]{\includegraphics[width=5.5cm]{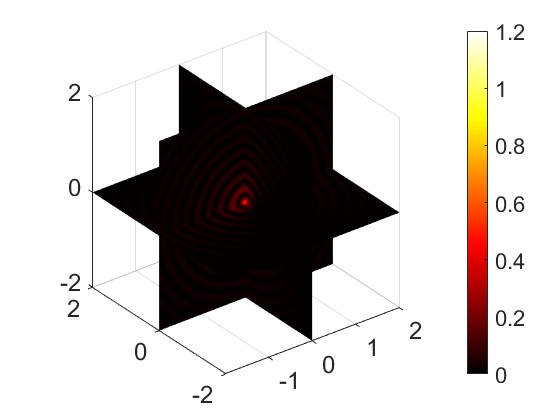}} \hspace{-0.4cm}
\subfloat[$\sum_{n=1}^3|\mathcal{K}(0,\z, \p_n)|^2$]{\includegraphics[width=5.5cm]{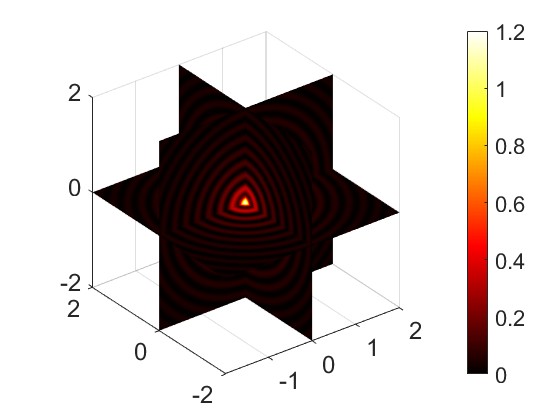}} \hspace{-0.4cm}
\subfloat[Top view of (a)]{\includegraphics[width=5.5cm]{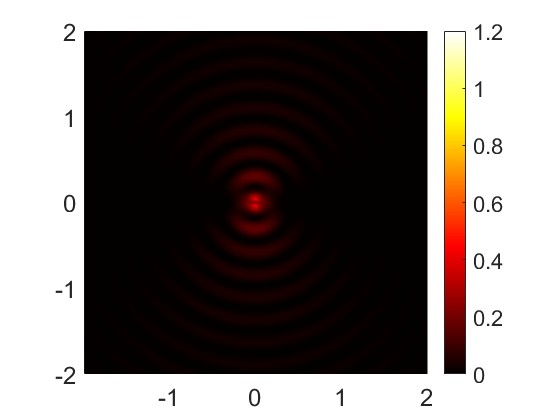}} \hspace{-0.6cm}
\subfloat[Top view of (b)]{\includegraphics[width=5.5cm]{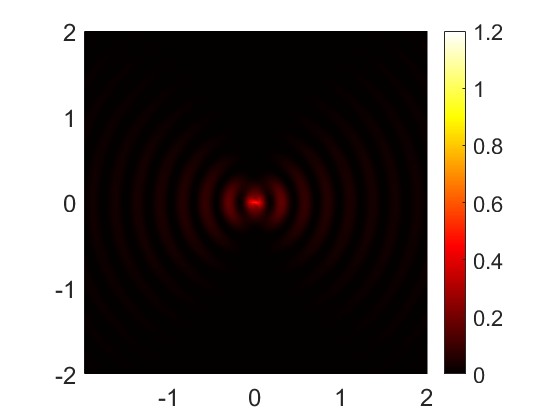}} \hspace{-0.6cm}
\subfloat[Top view of (c)]{\includegraphics[width=5.5cm]{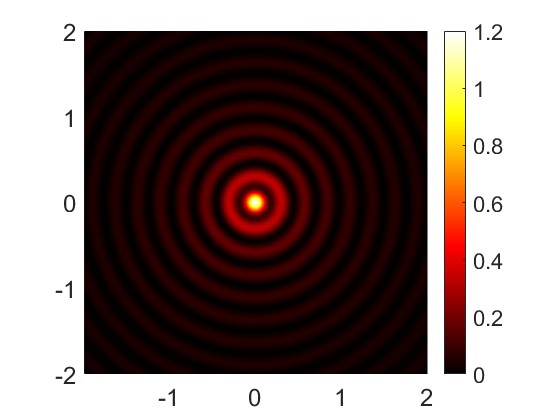}} \hspace{0cm}\\ 
\caption{Plots illustrating  $|\mathcal{K}(\y,\z, \p_1)|^2$, $|\mathcal{K}(\y,\z, \p_2)|^2$, and $\sum_{n=1}^3|\mathcal{K}(\y,\z, \p_n)|^2$, respectively, in (a), (b), and (c),  for $\y = 0$, $\z \in (-2,2)^2$,  $k = 12$,  and $\p_1 = (1,0,0)^\top, \p_2 = (0,1,0)^\top,  \p_3 = (0,0,1)^\top$.}
 \label{fi00}
\end{figure}

\begin{proof}
From~\eqref{LS2}  and a direct calculation we obtain 
\begin{align*}
&\u(\x) =  \int_{D} \mathbb{G}(\x,\y)  P(\y) \E(\y) \d \y, \\
&\nu \times \curl \u(\x) =  \int_{D} \nu \times   \curl_\x\left(\mathbb{G}(\x,\y) \right) P(\y) \E(\y) \d \y,
\end{align*}
where  $\curl$ of the Green's tensor is taken columnwise. 
A substitution leads to 
\begin{align}
& \int_{\partial \Omega} \left[ \nu \times \curl(\Im \mathbb{G}(\x,\z)\p_n) \right]\cdot \u(\x,\di_j) - \Im \mathbb{G}(\x,\z)\p_n \cdot \left[ \nu \times \curl  \u(\x,\di_j) \right] \d s(x) \nonumber\\
 & =  \int_{\partial \Omega} \left[ \nu \times \curl(\Im \mathbb{G}(\x,\z)\p_n) \right]\cdot  \int_{D} \mathbb{G}(\x,\y)  P(\y) \E(\y) \d \y \nonumber\\ 
 & - \Im \mathbb{G}(\x,\z)\p_n \cdot \left[    \int_{D} \nu \times \curl_\x \left[\mathbb{G}(\x,\y) \right] P(\y) \E(\y) \d \y \right] \d s(x)\nonumber \\
 & =  \int_{D} \int_{\partial \Omega}  \left[ \nu \times \curl(\Im \mathbb{G}(\x,\z)\p_n) \right]\cdot  \mathbb{G}(\x,\y)  P(\y) \E(\y)  \nonumber \\ 
 \label{id1}
 & - \Im \mathbb{G}(\x,\z)\p_n \cdot \left[     \nu \times \curl_\x \left(\mathbb{G}(\x,\y) \right) P(\y) \E(\y)  \right] \d s(x) \d \y.
\end{align}
Now we will compute the integral over $\partial \Omega$ in the  equation~\eqref{id1}. 
First from~\eqref{tensor} and a direct calculation we have that
$$
\Im\mathbb{G}(\x,\z)\p_n = \Im\Phi(\x,\z)\p_n + \frac{1}{k^2} \nabla_\x \div_\x(\Im\Phi(\x,\z)\p_n).
$$
Using this result, the identities  $\curl \nabla = 0$, $ \curl \curl = -\Delta + \nabla \div$, and the fact that 
\begin{align*}
\quad \Delta_\x\Im\Phi(\x,\z) = -k^2 \Im\Phi(\x,\z), \quad \text{for all } \x,\z \in \R^3,
\end{align*}
we derive
\begin{align}
\label{eq2}
\curl_\x \curl_\x \left(\Im \mathbb{G}(\x,\z)\p_n \right) = k^2 \Im \mathbb{G}(\x,\z)\p_n\quad \text{for all } \x,\z \in \R^3. 
\end{align}
In addition, for $ \q \in \R^3$, we have
\begin{align}
\label{eq1}
\curl_\x\curl_\x \left(\mathbb{G}(\x,\y)\q\right) - k^2 \mathbb{G}(\x,\y)\q = - \delta(\x-\y)\q.
\end{align}
Multiplying equations~\eqref{eq1} and~\eqref{eq2} by $\Im \mathbb{G}(\x,\z)\p_n$ and $\mathbb{G}(\x,\y)\q$, respectively, and integrating over $\Omega$ we obtain 
\begin{align*}
&\int_\Omega \curl_\x\curl_\x \left(\mathbb{G}(\x,\y)\q\right)\cdot  \Im \mathbb{G}(\x,\z)\p_n - k^2 \mathbb{G}(\x,\y)\q \cdot  \Im \mathbb{G}(\x,\z)\p_n \, \d \x = -\q \cdot  \Im \mathbb{G}(\y,\z)\p_n,\\
&\int_\Omega \curl_\x\curl_\x \left(\Im \mathbb{G}(\x,\z)\p_n \right)\cdot \mathbb{G}(\x,\y)\q - k^2 \Im \mathbb{G}(\x,\z)\p_n \cdot \mathbb{G}(\x,\y)\q \, \d \x = 0.
\end{align*}
Making a subtraction of the two equations above and integrating by parts we derive
\begin{align*}
&\int_{\partial \Omega}  \Im\mathbb{G}(\x,\z)\p_n\cdot \nu \times \curl_\x \mathbb{G}(\x,\y)\q - \nu \times \curl_\x \Im \mathbb{G}(\x,\z)\p_n \cdot \mathbb{G}(\x,\y)\q \,\d s(\x) \\
&= -\q \cdot  \Im \mathbb{G}(\y,\z)\p_n.
\end{align*}
Choosing $\q = P(\y) \E(\y)$ leads to
\begin{align*}
&\int_{\partial \Omega}  \Im\mathbb{G}(\x,\z)\p_n\cdot \nu \times \curl_\x \mathbb{G}(\x,\y)P(\y) \E(\y) 
- \nu \times \curl_\x \Im \mathbb{G}(\x,\z)\p_n \cdot \mathbb{G}(\x,\y)P(\y) \E(\y)\, \d s(\x) \\
&= -P(\y) \E(\y) \cdot  \Im \mathbb{G}(\y,\z)\p_n.
\end{align*}
Now substituting this result in~\eqref{id1} and comparing with $\mathcal{I}(\z)$ given in~\eqref{I2} we obtain 
$$
\mathcal{I}(\z) =  \sum_{n = 1}^M \sum_{j=1}^N \left| \int_{D}  \Im \mathbb{G}(\y,\z)\p_n \cdot P(\y) \E(\y) \d \y\right|^p.
$$
Setting $\mathcal{K}(\y,\z,\p_n) =  \Im \mathbb{G}(\y,\z)\p_n$ then 
$$
\mathcal{K}(\y,\z,\p_n) =  \frac{k}{4\pi} \left( j_0(k|\y-\z|)\p_n +  \frac{1}{k^2} \nabla_\y \div_\y( j_0(k|\y-\z|) \p_n)\right).
$$
Recall that $ j_0(k|\y-\z|) = \sin(k|\y-\z|)/(k|\y-\z|)$, a straightforward calculation implies
\begin{align*}
\mathcal{K}(\y,\z,\p_n) &= - \frac{k\p_n| \y - \z|^2-k(\p_n\cdot ( \y - \z))(\y - \z)}{4\pi|\y - \z|^2} j_0(k| \y - \z|) \\
&+ \frac{3(\p_n \cdot ( \y - \z))( \y - \z)-\p_n| \y - \z|^2}{k 4\pi | \y - \z|^4}(\cos(k| \y - \z|)-j_0(k| \y - \z|)).
\end{align*}
Using the Cauchy-Schwartz and triangle inequalities we obtain 
$$
\left|\mathcal{K}(\y,\z,\p_n)\right| \leq \frac{k}{2\pi} \left| j_0(k| \y - \z|)\right| + \frac{1 + \left|j_0(k| \y - \z|) \right|}{k\pi|\y-\z|}.
$$
From this estimate and the fact that $j_0(k|\y-\z|)  = O(1/|\y-\z|)$ as $|\y - \z| \to \infty$ we imply that
$$
\left|\mathcal{K}(\y,\z,\p_n)\right|  = O\left(\frac{1}{| \y - \z|}\right), \quad \text{as }  |\y - \z| \to \infty,
$$
which can help us deduce~\eqref{decay2} similarly as in the scalar case.

\end{proof}

We assume the noisy data $\u^\delta$ and $\nu \times \curl \u^\delta$ satisfy    
\begin{align*}
&\sum_{j = 1}^N\| \u(\cdot,\di_j) - \u^\delta(\cdot,\di_j) \|_{(L^2(\partial \Omega))^3}  \leq \delta_1,  \\
&\sum_{j = 1}^N \left\| \nu \times \curl \u(\cdot, \di_j)  - \nu \times \curl \u^\delta(\cdot, \di_j)  \right \|_{(L^2(\partial \Omega))^3}  \leq \delta_2,
\end{align*}
for some positive constants $\delta_1, \delta_2 $. We now have a stability estimate for the imaging  functional  $\mathcal{I}(\z)$.

\begin{theorem}
\label{stability2} 
Denote   by $\mathcal{I}^{\delta}(\z)$ the  imaging  functional corresponding to noisy data $ \u^\delta$ and $\nu \times \curl \u^\delta$, that means, 
\begin{align*}
\mathcal{I}^\delta(\z) = 
\sum_{n = 1}^M \sum_{j=1}^N \left| \int_{\partial \Omega} \left[ \nu \times \curl(\Im \mathbb{G}(\x,\z)\p_n) \right]\cdot \u^\delta(\x,\di_j) - \Im \mathbb{G}(\x,\z)\p_n \cdot \left[ \nu \times \curl  \u^\delta(\x,\di_j) \right] \d s(x)\right|^p. 
\end{align*}
 Then for all $\z \in \R^3$,
\begin{align*}
|\mathcal{I}(\z) - \mathcal{I}^\delta(\z)  | = O(\max\{\delta_1,\delta_2\}) \quad \text{as}\ \max\{\delta_1,\delta_2\}\to 0.
\end{align*}
\end{theorem}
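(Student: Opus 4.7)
The plan is to mirror the proof of Theorem~\ref{stability} essentially verbatim, replacing the scalar $L^2(\partial \Omega)$-norms by their vector-valued analogues $(L^2(\partial \Omega))^3$ and carrying an additional outer summation over the polarization indices $n = 1, \dots, M$. First I would abbreviate, for each pair $(j,n)$,
\begin{align*}
\alpha_{j,n} &:= \left| \int_{\partial \Omega} [\nu \times \curl(\Im \mathbb{G}(\x,\z)\p_n)] \cdot \u(\x,\di_j) - \Im \mathbb{G}(\x,\z)\p_n \cdot [\nu \times \curl \u(\x,\di_j)] \d s(\x) \right|,
\end{align*}
and define $\beta_{j,n}$ analogously with $\u^\delta$ replacing $\u$, so that $\mathcal{I}(\z) = \sum_{n,j} \alpha_{j,n}^p$ and $\mathcal{I}^\delta(\z) = \sum_{n,j} \beta_{j,n}^p$.

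Next I would apply the triangle inequality and the componentwise Cauchy-Schwarz inequality on $\partial \Omega$, together with the noise bounds, to derive
\begin{align*}
|\alpha_{j,n} - \beta_{j,n}| \leq \sqrt{\|\nu \times \curl(\Im \mathbb{G}(\cdot,\z)\p_n)\|_{(L^2(\partial \Omega))^3}^2 + \|\Im \mathbb{G}(\cdot,\z)\p_n\|_{(L^2(\partial \Omega))^3}^2}\,\max\{\delta_1,\delta_2\}.
\end{align*}
The key observation is that, for a fixed sampling point $\z$ with $\z \notin \partial \Omega$, both norms on the right-hand side are finite. Indeed, the singularity of the free-space Green's tensor $\mathbb{G}(\x,\z)$ at $\x = \z$ is concentrated entirely in its real part, so that $\Im \mathbb{G}(\x,\z)$ and its tangential curl are smooth in $\x$ on the compact surface $\partial \Omega$; this is immediate from the explicit expression for $\Im \mathbb{G}(\cdot,\z)\p_n$ in terms of $j_0(k|\cdot - \z|)$ and its derivatives, as already computed in the proof of Theorem~\ref{thm2} above.

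Finally I would conclude exactly as in the scalar case, using the algebraic identity $\beta^p - \alpha^p = (\beta - \alpha)\sum_{m=0}^{p-1} \beta^m \alpha^{p-1-m}$ and the inequality $(a+b)^m \leq 2^m(a^m+b^m)$ for nonnegative reals, both applied uniformly over the finite double sum indexed by $(n,j)$. Letting $\alpha_{\max} = \max_{j,n} \alpha_{j,n}$ (which is finite and independent of the noise level), this yields
\begin{align*}
|\mathcal{I}(\z) - \mathcal{I}^\delta(\z)| \leq \sum_{n=1}^M \sum_{j=1}^N |\beta_{j,n}^p - \alpha_{j,n}^p| = O(\max\{\delta_1,\delta_2\})
\end{align*}
as $\max\{\delta_1,\delta_2\} \to 0$. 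I do not expect any essentially new obstacle relative to the scalar proof; the only step that is formally different is the verification that $\Im \mathbb{G}(\cdot,\z)\p_n$ and $\nu \times \curl(\Im \mathbb{G}(\cdot,\z)\p_n)$ lie in $(L^2(\partial \Omega))^3$ uniformly in $n$, but this follows at once from the smoothness of $\Im \mathbb{G}$ away from the diagonal, together with the compactness of $\partial \Omega$ and the finiteness of the index set $\{\p_1,\dots,\p_M\}\subset \S^2$.
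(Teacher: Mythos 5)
Your proposal is correct and matches the paper's approach exactly: the paper simply states that the proof "can be done similarly as in the scalar case" and omits it, and your argument is precisely that scalar proof transplanted to the vector setting, with the correct additional observation that $\Im\mathbb{G}(\cdot,\z)\p_n$ and its tangential curl are smooth (hence in $(L^2(\partial\Omega))^3$) since the Green's tensor's singularity sits in its real part.
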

\begin{proof}
The proof can be done similarly as in the scalar case and is therefore omitted.
\end{proof}

\subsection{Numerical Examples}
We provide numerical examples to effectively illustrate the efficiency of the imaging functional $\mathcal{I}(\z)$ in reconstructing  scatterers using simulated data. Specifically,  we  examine the performance of the functional  using different numbers of incident waves (Figure~\ref{fi4}), using different sets of  vectors $\p_n$ for $n = 1, \dots, M$ (Figure~\ref{fi5}),  and  using highly noisy data (Figure~\ref{fi6}).
As in the scalar case, the imaging functional is normalized by dividing it with its maximal value.
  
  The following common parameters  are used in the numerical examples 
\begin{align*}
 &\partial \Omega = \{ (x_1,x_2,x_3)^\top \in \R^2: x_1^2 + x_2^2 + x_3^2 = 3^2 \}, \\
  & \text{Number of data points on } \partial \Omega \text{ : } 324  \ (\text{uniformly distributed}), \\
&\text{Sampling  domain}: (-1.5,1.5)^3, \\
&\text{Number of sampling points}: 60^3,\\
& \text{Power } p \text{ in } I(z): p = 2,\\
& \text{Isovalue for  3D plots in Matlab : } 0.4.
\end{align*}
The choice of $p = 2$ follows from the numerical study in the 2D case. 
  The following  scattering object consisting a sphere ($D_1$) and an L shape ($D_2\setminus D_3$) is considered in the numerical examples in Figures~\ref{fi4}--\ref{fi6}
\begin{align*}
D_1 &= \left\{\x \in \R^3: |\x - \mathbf{a}|^2 < 0.5^2, \mathbf{a} = (0.6,0,0.5)^\top \right \}, \\
D_2 &= \left\{\x \in \R^3: -1.1 < x_1 < 0.1, -0.3< x_2 < 0.3, -1< x_3 < 0.2  \right \}, \\
D_3 &= \left\{\x \in \R^3: -0.5 < x_1 < 0.1, -0.3< x_2< 0.3, -0.4< x_3< 0.2   \right \}, \\
Q(\x) &= \begin{cases} 
\mathrm{diag}(0.5,0.4,0.3), \quad & \x \in D_1 \cup (D_2\setminus \overline{D_3}) \\
  0, \quad &  \text{else}.
    \end{cases}
\end{align*}
We consider the incident plane waves 
$$
\Ei(\x,\di_j)= (\di_j \times (1, 0, 0)^\top) e^{ik \di_j \cdot \x}, \quad \di_j \in \mathbb{S}^2, \quad j = 1, \dots, N,
$$
where the wave vectors $\di_j$ were generated
almost uniformly on $\mathbb{S}^2$.
It was observed during the numerical study that the choice of using $(1,0,0)^\top$ in the polarization of $\Ei(\x,\di_j)$ or other vectors did not appear to significantly impact the reconstruction results.
We simulated artificial noise using complex-valued noise matrices, similar to the approach used in the Helmholtz equation case.

\subsubsection{Reconstruction with different wave numbers  (Figure \ref{fi6})}

Figure~\ref{fi6} presents  the reconstruction results    for the wave numbers $k = 6$ (wavelength $\approx$ 1.04) 
and $k = 12$ (wavelength $\approx$ 0.52). We consider 180 incident waves to generate the data, which is   perturbed by a  2$\%$ noise level ($\delta = 0.02$).
As in the scalar case, we intentionally introduce a small amount of noise into the data to distinctly demonstrate the impact of using different wave numbers in the reconstruction process.
 Figure~\ref{fi6} illustrates that the reconstruction results notably improve with higher values of $k$.

\begin{figure}[h!!!]
\centering
\subfloat[True geometry]{\includegraphics[width=6.3cm]{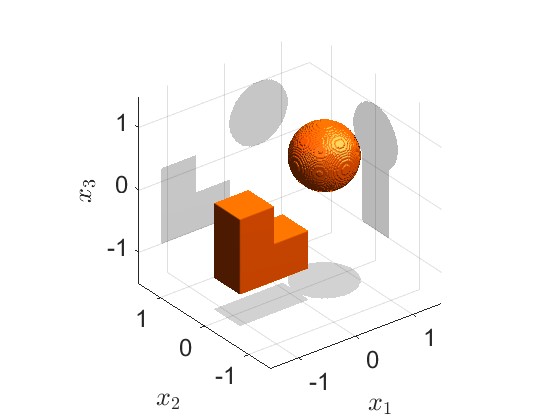}} \hspace{-1.45cm}
\subfloat[$k = 6$]{\includegraphics[width=6.3cm]{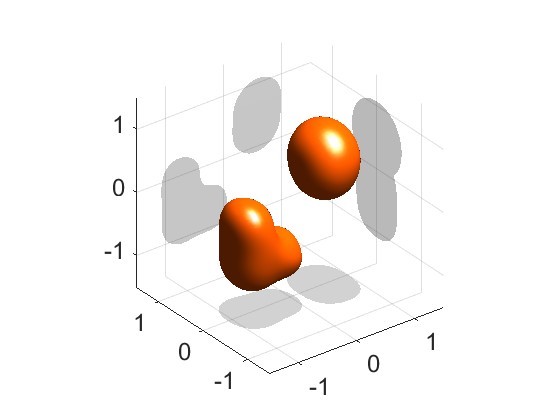}} \hspace{-1.45cm}
\subfloat[$k = 12$ ]{\includegraphics[width=6.3cm]{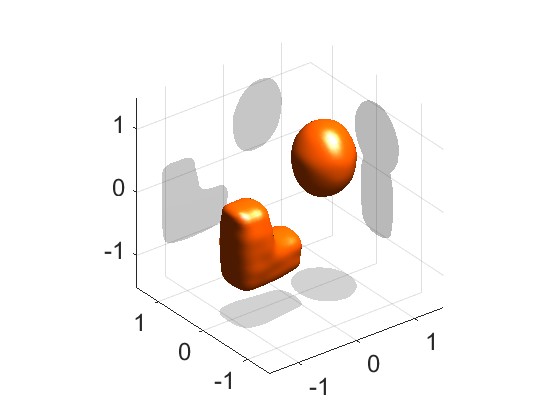}} \hspace{1.5cm}
\subfloat[$\mathcal{I}$ on $\{x_2 = 0\}$ for $k = 6$]{\includegraphics[width=5.5cm]{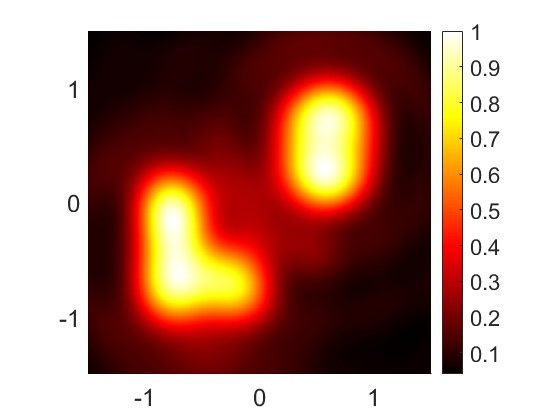}}  \hspace{0.75cm}
\subfloat[$\mathcal{I}$ on $\{x_2 = 0\}$ for $k  =12$ ]{\includegraphics[width=5.5cm]{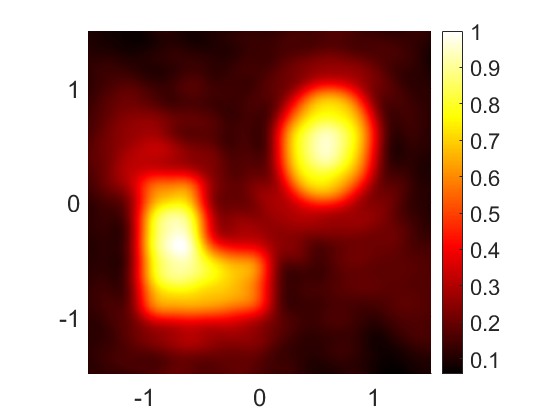}}  \hspace{0.75cm}

\caption{Reconstruction from   data associated with different wave numbers.  
The data is generated using 180 incident waves and
has a  2$\%$ noise level. $M = 3$ with $\p_1 = (1,0,0)^\top, \p_2 = (0,1,0)^\top, \p_3 = (0,0,1)^\top$.  
(a): true geometry. (b, d): reconstruction with $k  = 6$. (c, e): reconstruction with $k = 12$.
 } 
 \label{fi6}
\end{figure}

\subsubsection{Reconstruction with different numbers of incident waves (Figure \ref{fi4})}
We present the reconstruction results in Figure~\ref{fi4}  for noisy data using 48, 96 and 180 incident waves. The data is added with 
 20$\%$ noise and correspond to  the wave number $k  = 12$. We consider $M = 3$ with the vectors $\p_1 = (1,0,0)^\top, \p_2 = (0,1,0)^\top, \p_3 = (0,0,1)^\top$. 
 Remarkably, the reconstructions are already quite good when using 48 incident waves to generate the data. The results show a slight improvement with the use of more incident waves. Importantly, these results remain consistent even when more than 180 incident waves are employed in the reconstruction process.

\begin{figure}[h!!!]
\centering
\subfloat[True geometry]{\includegraphics[width=6.3cm]{casestairball_pset1_k_12_Nx_324_Nd_96_3Dview_true}} \hspace{-1cm}\\
\subfloat[$N = 48$]{\includegraphics[width=6.3cm]{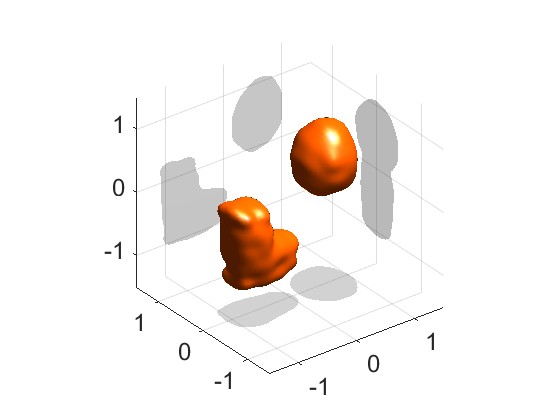}} \hspace{-1.45cm}
\subfloat[$N = 96$]{\includegraphics[width=6.3cm]{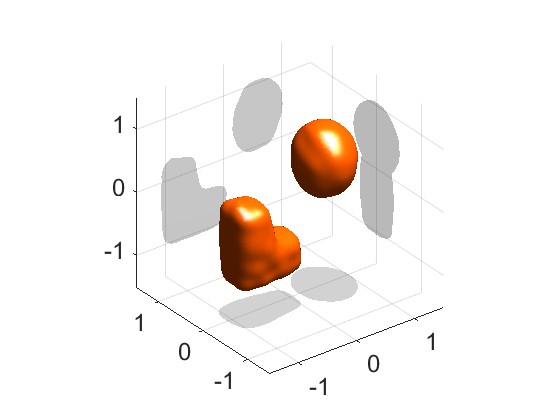}} \hspace{-1.45cm}
\subfloat[$N = 180$]{\includegraphics[width=6.3cm]{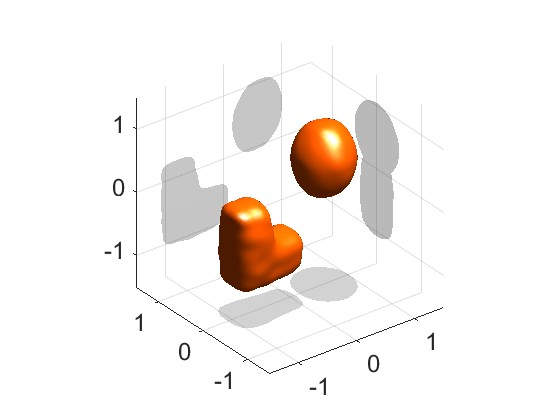}} \\
\subfloat[ $\mathcal{I}$ on $\{x_2 = 0\}$ for $N = 48$]{\includegraphics[width=4.5cm]{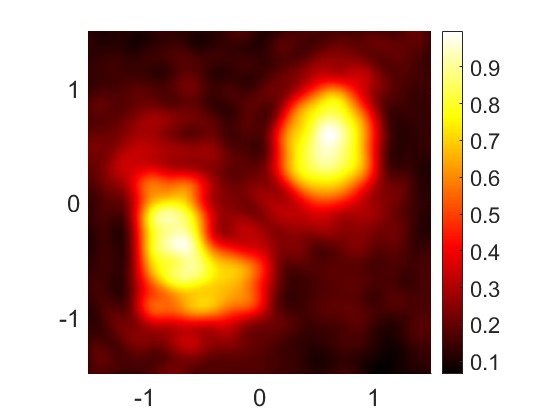}} \hspace{0.35cm}
\subfloat[ $\mathcal{I}$ on $\{x_2 = 0\}$ for $N = 96$]{\includegraphics[width=4.5cm]{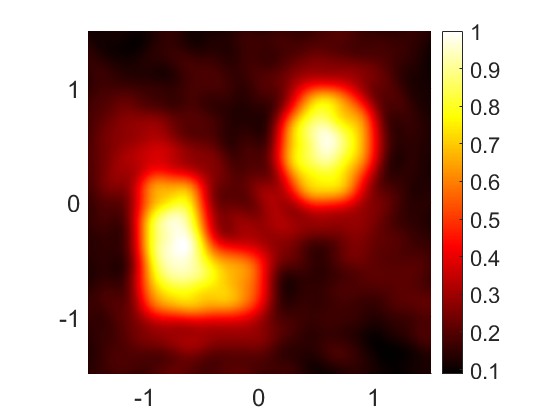}} \hspace{0.35cm}
\subfloat[ $\mathcal{I}$ on $\{x_2 = 0\}$ for $N = 180$]{\includegraphics[width=4.5cm]{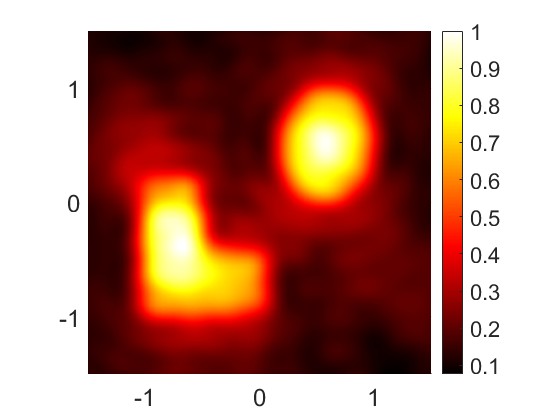}} 

\caption{Reconstruction  with  noisy data ($20\%$ noise) using $N$ incident waves, $k = 12$, and $M = 3$ with $\p_1 = (1,0,0)^\top, \p_2 = (0,1,0)^\top, \p_3 = (0,0,1)^\top$.  
First row (a): true geometry.  (b, e): reconstruction with $N = 48$. (c, f): reconstruction with $N = 96$.  (d, g): reconstruction with $N = 180$.
 } 
 \label{fi4}
\end{figure}

%

\subsubsection{Reconstruction  with different sets of  vectors $\p_n$ (Figure \ref{fi5})}

We present the reconstruction results in Figure~\ref{fi5}  for noisy data using different sets of vectors $\p_n$ for $n = 1, 2, \dots, M$. The data is added with  20$\%$ noise and correspond to  the wave number $k  = 12$. We consider three following cases: $M =1$ and  $\p_1 = (0,1,0)^\top$, $M =1$ and  $\p_1 = (1/\sqrt{2},0,1/\sqrt{2})^\top$, $M = 3,$ and  $\p_1 = (1/\sqrt{2},1/\sqrt{2},0)^\top, \p_2 = (0,1/\sqrt{2},1/\sqrt{2})^\top, \p_3 = (1/\sqrt{2},0,1/\sqrt{2})^\top$. For $M = 1$, the reconstructions using $\p_1 = (0,1,0)^\top$  and $\p_1 = (1/\sqrt{2},0,1/\sqrt{2})^\top$ 
yield noticeably different results and are not as satisfactory as those obtained with $M  =  3$. 
We also see that the result for $M = 3$ is similar to that of the case  $M = 3$ and $\p_1 = (1,0,0)^\top, \p_2 = (0,1,0)^\top, \p_3 = (0,0,1)^\top$ presented  in Figure~\ref{fi4}.  This aligns with the observation from Figure~\ref{fi00}, where using more $\p_n$  leads to improved peaks for the kernel function $\mathcal{K}$. 
Moreover,
our numerical study  supports the finding that employing $M = 3$ with three linearly independent vectors $\p_n$ appears to provide the most reasonable and stable results.

\begin{figure}[h!!!]
\centering
\subfloat[True geometry]{\includegraphics[width=6.3cm]{casestairball_pset1_k_12_Nx_324_Nd_96_3Dview_true}} \\
\subfloat[$M = 1$]{\includegraphics[width=6.3cm]{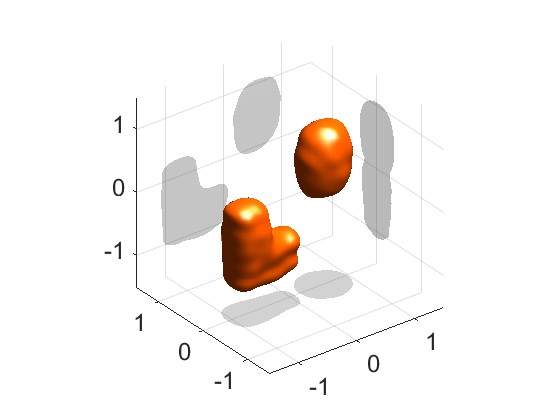}} \hspace{-1.45cm}
\subfloat[$M = 1$]{\includegraphics[width=6.3cm]{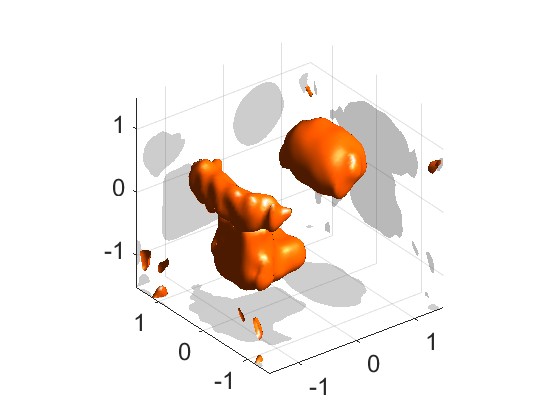}} \hspace{-1.45cm}
\subfloat[$M = 3$]{\includegraphics[width=6.3cm]{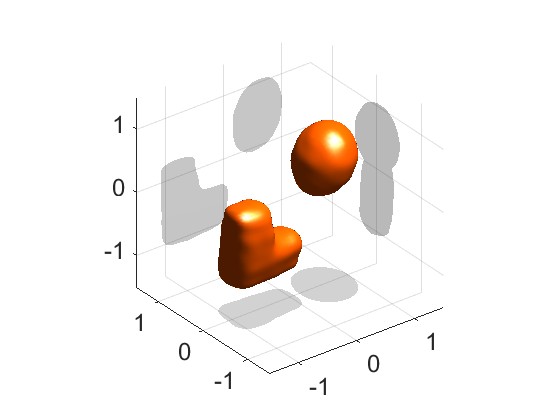}} \\
\subfloat[ $\mathcal{I}$ on $\{x_2 = 0\}$ for $M = 1$]{\includegraphics[width=4.5cm]{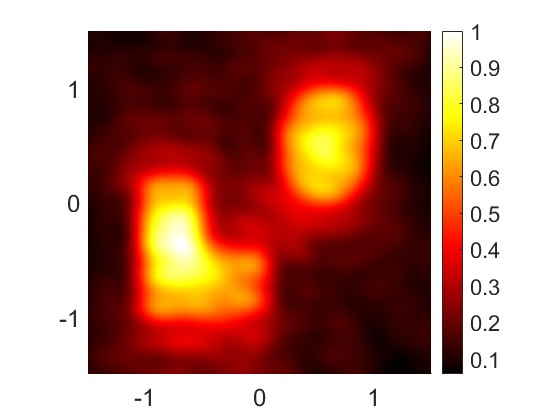}}  \hspace{0.75cm}
\subfloat[ $\mathcal{I}$ on $\{x_2 = 0\}$ for $M = 1$]{\includegraphics[width=4.5cm]{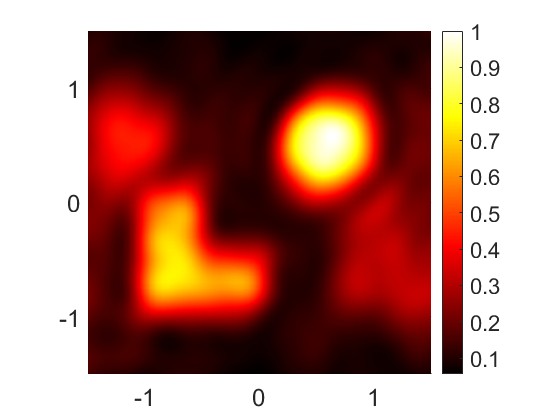}}  \hspace{0.75cm}
\subfloat[ $\mathcal{I}$ on $\{x_2 = 0\}$ for $M = 3$]{\includegraphics[width=4.5cm]{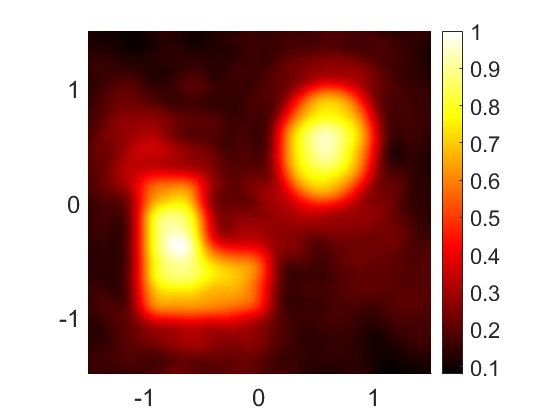}} 

\caption{Reconstruction  using different sets of vectors $\p_n$, $n = 1, 2, \dots, M$, $k = 12, N = 180$, and 20\% noise in the data.
(a): True geometry. (b, e): reconstruction with $M =1, \p_1 = (0,1,0)^\top$. (c, f): reconstruction with $M =1, \p_1 = (1/\sqrt{2},0,1/\sqrt{2})^\top$.
 (d, g): reconstruction with $M = 3, \p_1 = (1/\sqrt{2},1/\sqrt{2},0)^\top, \p_2 = (0,1/\sqrt{2},1/\sqrt{2})^\top, \p_3 = (1/\sqrt{2},0,1/\sqrt{2})^\top$.
 } 
 \label{fi5}
\end{figure}

%

\vspace{0.5cm}
\textbf{Acknowledgement}. The first author acknowledges Ho Chi
Minh City University of Technology and Education for funding her work through the
grant No T2023-71.
The work of  the second author  was partially supported by NSF grant DMS-2208293.

\bibliographystyle{plain}
\bibliography{ip-biblio}

\end{document}